\theoremstyle{definition}
\newtheorem{De}{Definition}
\theoremstyle{theorem}
\newtheorem{Th}{Theorem}
\theoremstyle{lemma}
\newtheorem{Lm}{Lemma}
\theoremstyle{plain}
\newtheorem{Pl}{Proposition}
\theoremstyle{remark}
\newtheorem{Remark}{Remark}
\theoremstyle{plain}
\newtheorem{corollary}{Corollary}
\newcounter{tmp}
\newcounter{tmp1}
\newcommand{\intrd}{\int_{\mathbb{R}^d}}
\newcommand{\intrmb}{\int_{\mathbb{R}^d\setminus B_1}}
\newcommand{\ir}{[0,T]\times \mathbb{R}^{d}}
\newcommand{\ues}{\hat{\mathfrak{u}}_\Delta} 
\begin{document}
\title{Approximate solutions of continuous-time
stochastic games}

\author{Yurii Averboukh\footnote{Krasovskii Institute of Mathematics and Mechanics UrB RAS and Ural Federal University, Yekaterinburg, Russia, e-mail: ayv@imm.uran.ru}}
\maketitle
\begin{abstract}
The paper is concerned with a zero-sum continuous-time stochastic differential game with a dynamics controlled by  a Markov process and a terminal payoff. The value function of the original game is estimated using the value function of a model game. The dynamics of the model game differs from the original one. The general result applied to differential games yields the approximation of value function of differential game by the solution of countable  system of ODEs.

\noindent{\bf Keywords:} continuous time stochastic games, differential games, strategy with memory, near optimal strategies, extremal shift.
\end{abstract}
\section{Introduction}
Continuous-time dynamical games can be classified as differential games, stochastic differential games and Markov games (or continuous-time stochastic games). For each  type 
the existence theorem for the value function  is proved (see  \cite{bardi}, \cite{elliot_kalton}, \cite{friedman}, \cite{NN_PDG_en}, \cite{varaya_lin} for differential games case, \cite{buslaeva}, \cite{elliot_stochastic}, \cite{Hamadene}, \cite{SDG_book} for stochastic games case and \cite{Guo_Hernandez_lerma}, \cite{Zachrisson} for continuous-time Markov games case). Moreover, it is shown that the value function solves the Isaacs-Bellman equation (see \cite{evans_souganidis}, \cite{Subb_book} for differential games case, \cite{buckdahn_li}, \cite{buslaeva}, \cite{SDG_book} for stochastic games case and \cite{Zachrisson} for continuous-time Markov games case).
The aim of this paper is to provide an approximation of a solution of a continuous-time dynamical game  by a solution of a game  with a different dynamics.

First this problem was considered for particular cases in   \cite{averboukh_dgaa}, \cite{Kol}--\cite{a6}.
In \cite{a4}--\cite{a6} the approximation of the value function of differential game by the value function of stochastic differential game was constructed. In \cite{Kol} (see also \cite{averboukh_dgaa}) the continuous-time Markov game describing the system of interacting particles with the finite number of states is considered by examining the differential game corresponding to the limit case when the number of particles tends to infinity.  It is proved that if the strategy is optimal for the limit game then it is near optimal for the Markov game.

In this paper we consider the following problem: given two stochastic games controlled by Markov processes associated with generators of L\'{e}vy-Khintchine type, construct the strategy in the first game approximating the value function of the second game. To this end we use the  extremal shift first proposed by Krasovskii and Subbotin for differential games \cite{NN_PDG_en} (see also \cite{subb_chen}). In this case the design of strategy relies on a model of the system. 
In the early works by Krasovskii and Subbotin the model was a copy of the original system \cite{NN_PDG_en},  \cite{KrasSubb_guide}. 
Later it was considered  the case when the original system is governed by a delay differential equation and the model is governed by a differential equation  \cite{kras_delay}, \cite{luk_plaks} and the case when the original system is governed by a differential equation whereas the model is described by a stochastic differential equation \cite{a4}--\cite{a6}. In \cite{averboukh_dgaa} the extremal shift is constructed for the case when the original system is the Markov chain describing many particle interacting system and the model is governed by a differential equation.

We construct the   extremal shift  for the first game using the second game as a model. If the player uses this strategy, then her outcome is estimated by the value function of the second game, the rate of the proximity of the original and model systems and the rate of the randomness of the dynamics of both games. Thus, the result is primary applicable for the case when either the original system or the model is deterministic.  We apply it for the case when the first game is a differential game when the second game is a continuous-time Markov game. This yields the approximation of the value function of the differential game by the solution of the  system of countably many ODEs.

The paper is organized as follows. In  Section \ref{sect_def} we describe the examining class of games, define strategies with memory and introduce the assumptions. In Section~\ref{sect_strategies} we define the extremal shift  for a continuous-time stochastic game controlled by a Markov process associated with a generator of L\'{e}vy-Khintchine type and formulate the main result of the paper concerning upper and lower bounds of the value function. In Section~\ref{sect_properties} we prove the main result. In Section \ref{sect_diffgames} we recall the main notions of the theory of differential games. Moreover we  derive the near optimal strategies for the differential game based on solution of the parabolic equation. Note that first this construction was proposed by Krasovskii and Kotelnikova for pursuit-evasion games \cite{a4}--\cite{a6}.  In the last section we present  the approximation of the value of the differential game by the solution of the system of countably many ODEs.

\section{Definitions and assumptions}\label{sect_def}
For $u\in U$ and $v\in V$ let $L^1_t[u,v]:C^2(\mathbb{R}^d)\rightarrow C(\mathbb{R}^d)$ be a generator of L\`{e}vy-Khintchine type  i.e.
\begin{multline*}L^1_t[u,v]\varphi(x)=\frac{1}{2}\langle G^1(t,x,u,v)\nabla,\nabla\rangle\varphi(x)+\langle f^1(t,x,u,v),\nabla\rangle \varphi(x)\\+\int_{\mathbb{R}^d}[\varphi(x+y)-\varphi(x)-\langle y,\nabla \varphi(x)\rangle\mathbf{1}_{B_1}(y)]\nu^1(t,x,u,v,dy)]. \end{multline*} Here $B_1$ denotes the unit ball centered at the origin, $G^1(t,x,u,v)$ is a nonnegative symmetric matrix, $\nu^1(t,x,u,v,\cdot)$ is a measure on $\mathbb{R}^d$ such that
$\nu^1(t,x,u,v,\{0\})=0$ and
$$\intrd \min\{1,y^2\}\nu^1(t,x,u,v,\cdot)dy<\infty. $$

The parameters $u$ and $v$ are considered as  controls of the first and second players respectively. The classes of admissible strategies of the first and second players are described below. Note that under some conditions the operator $L^1_t[u,v]$  generates a stochastic process $X(\cdot)$. 

The first (respectively second) player   wishes to minimize (respectively, maximize) $\mathbb{E}g(X(T))$. In the paper we approximate the value of this game using a solution of a stochastic game with a dynamics governed by a Markov process associated with a generator of L\`{e}vy-Khintchine type
\begin{multline*}L^2_t[u,v]\varphi(x)\triangleq\frac{1}{2}\langle G^2(t,x,u,v)\nabla,\nabla\rangle\varphi(x)+\langle f^2(t,x,u,v),\nabla\rangle \varphi(x)\\+\int_{\mathbb{R}^d}[\varphi(x+y)-\varphi(x)-\langle y,\nabla \varphi(x)\rangle\mathbf{1}_{B_1}(y)]\nu^2(t,x,u,v,dy)]. \end{multline*}

In the general case, $L_t^1[u,v]\neq L_t^2[u,v]$.

To simplify the designations we denote by
$\mathbb{D}_t$ the Skorokhod space $D([t,T],\mathbb{R}^d)$. This set is endowed by the flow of $\sigma$-algebras  $\mathbb{F}_{t,s}\triangleq\mathcal{B}(D([t,s],\mathbb{R}^d))$. Here $\mathcal{B}(S)$ denotes the Borel $\sigma$-algebra on metric space $S$. Recall \cite[Theorem 12.5]{Billingsley} that $$ \mathbb{F}_{t,s}=\sigma\{(\pi_{t_1,\ldots,t_k})^{-1}(A_1,\ldots,A_k):t_1,\ldots,t_k\in [t,s], A_1,\ldots,A_k\subset \mathbb{R}^d\}, $$ where $\pi_{t_1,\ldots,t_k}:\mathbb{D}_t\rightarrow\mathbb{R}^k$ is a projection $\pi_{t_1,\ldots,t_k}x(\cdot)=(x(t_1),\ldots,x(t_k))$.

To simplify the designations put

$$\Sigma^i(t,x,u,v)\triangleq \sum_{j=1}^dG_{jj}^i(t,x,u,v)+\int_{\mathbb{R}^d}\|y\|^2\nu^i(t,x,u,v,dy). $$
$$
b^i(t,x,u,v)\triangleq f^i(t,x,u,v)+\intrmb y\nu^i(t,x,u,v,dy).
$$
Note that the action of the generator $L^i_t[u,v]$ on the function $x\mapsto \langle a,x\rangle$ for any constant $a\in\mathbb{R}^d$ is equal to $\langle a,b^i(t,x,u,v)\rangle.$ Analogously, if $\vartheta_a(x)=\|x-a\|^2$ then
\begin{equation}\label{action_quadratic}
  L^i_t[u,v]\vartheta_a(x)=\Sigma^i(t,x,u,v)+2\langle x-a,b^i(t,x,u,v)\rangle.
\end{equation}

Further, let $\mathcal{A}$ denote the set of functions $\alpha:\mathbb{R}\rightarrow [0,\infty)$ such that $\alpha(\delta)\rightarrow 0$ as $\delta\rightarrow 0$.

We assume that the sets $U$, $V$, the generators $L^1$, $L^2$ and the function $g$ satisfy the following conditions
\begin{list}{(L\arabic{tmp})}{\usecounter{tmp}}
  \item $U$, $V$ are metric compact;
  \item $G^i$, $f^i$, $i=1,2$, are continuous function;
  \item $\nu^i$, $i=1,2$, are weakly continuous;
  \item there exist functions $\alpha_1^i(\cdot)\in\mathcal{A}$, $i=1,2$, such that for any $t\in [0,T]$, $x\in\mathbb{R}^d$, $u\in U$, $v\in V$
  $$\|b^i(t,x,u,v)-b^i(s,x,u,v)\|^2\leq \alpha^i_1(t-s); $$
  \item there exist constants $M_0^i$ and $M_1^i$, $i=1,2$, such that  for any $t\in [0,T]$, $x\in\mathbb{R}^d$, $u\in U$, $v\in V$
  $$|\Sigma^i(t,x,u,v)|\leq M_0^i,\ \  \|b^i(t,x,u,v)\|\leq M_1^i;$$
  \item there exist constants $K^i$, $i=1,2$, such that  for any $t\in [0,T]$, $x,y\in\mathbb{R}^d$, $u\in U$, $v\in V$
  $$\|b^i(t,x,u,v)-b^i(t,y,u,v)\|\leq K^i\|x-y\|; $$
  \item $g$ is Lipschitz continuous with constant $R$;
  \item (Isaacs condition)  either
  \begin{list}{(\arabic{tmp1})}{\usecounter{tmp1}}
  \item
   for any $t\in [0,T]$, $x,\xi\in\mathbb{R}^d$, $u\in U$, $v\in V$
  $$
  \min_{u\in U}\max_{v\in V}\langle\xi,b^1(t,x,u,v)\rangle\\=
\max_{v\in V}\min_{u\in U}\langle\xi,b^1(t,x,u,v)\rangle.
$$
\end{list}
or
  \begin{list}{(\arabic{tmp1})}{\usecounter{tmp1}}\setcounter{tmp1}{1}
  \item
   for any $t\in [0,T]$, $x,\xi\in\mathbb{R}^d$, $u\in U$, $v\in V$
  $$
  \min_{u\in U}\max_{v\in V}\langle\xi,b^2(t,x,u,v)\rangle\\=
\max_{v\in V}\min_{u\in U}\langle\xi,b^2(t,x,u,v)\rangle.$$
\end{list}
\end{list}

Note  (see \cite[Theorems 5.4.2 and 5.5.1]{Kol_markov}) that under imposed conditions for each $t_0$, initial distribution $m_0$, controls $u\in U$, $v\in V$ and $i=1,2$ there exist a filtered probability space and an adapted process $X$ satisfying ${\rm Law}(X(t_0))=m_0$ and for any $\varphi\in\mathcal{D}^i$
$$\varphi(X(t))-\int_{t_0}^t L^i_\tau[u,v]\varphi(X(\tau))d\tau $$
is a martingale. Here the set $\mathcal{D}^i$ is such that  $\mathcal{D}^i\subset C^2(\mathbb{R}^d)$ and $C_b^2(\mathbb{R}^d)\cup\{\vartheta_a\}_{a\in\mathbb{R}^d}\subset \mathcal{D}^i$.

Now we turn to the formalization of the game with the dynamics given by the generator $L_t^1[u,v]$. We assume that the players use randomized strategies with memory. The following definition is inspired by the definition proposed in \cite[p.~79]{GihmanSkorokhod}.
\begin{De}\label{def_u_strategy}Let $t_0$ be an initial time. A strategy of the first player on $[t_0,T]$ is a 5-tuple $\mathfrak{u}=(\Omega^U,\mathcal{F}^U,\{\mathcal{F}^U_s\}_{s\in [t_0,T]},u_{x(\cdot)},P_{x(\cdot)}^U)$ satisfying the following conditions
\begin{enumerate}
  \item $(\Omega^U,\mathcal{F}^U,\{\mathcal{F}^U_s\}_{s\in [t_0,T]})$ is a filtered space;
  \item for each function $x(\cdot)\in \mathbb{D}_{t_0}$ $u_{x(\cdot)}$ is a $\{\mathcal{F}^U_s\}_{s\in [t_0,T]}$-progressive measurable stochastic process with values in $U$, whereas $P_{x(\cdot)}^U$ is a probability on $(\Omega^U,\mathcal{F}^U,\{\mathcal{F}^U_s\}_{s\in [t_0,T]})$.
  \item if $y(s)=x(s)$ for all $s\in [t_0,t]$ then for any $A\in \mathcal{F}^U_t$ $P_{x(\cdot)}^U(A)=P_{y(\cdot)}^U(A)$ and $u_{x(\cdot)}(s)=u_{y(\cdot)}(s)$ $P_{x(\cdot)}^U$-a.s., $s\in [t_0,t]$;
  \item for any $t\in [t_0,T]$ the function $ (x(\cdot),s,\omega)\mapsto u_{x(\cdot)}(t,\omega)$ is measurable with respect to $\mathbb{F}_{t_0,t}\otimes \mathcal{B}([t_0,t])\otimes \mathcal{F}^U_{t_0,t}$.
\end{enumerate}
\end{De}

A strategy $\mathfrak{u}=(\Omega^U,\mathcal{F}^U,\{\mathcal{F}^U_s\}_{s\in [t_0,T]},u_{x(\cdot)},P_{x(\cdot)}^U)$ is called \textit{stepwise} if there exists a partition $\Delta=\{t_l\}_{l=1}^r$ of the interval $[t_0,T]$ such that
equality $x(t_k)=y(t_k)$, $k=0,\ldots, l-1$ implies that $P_{x(\cdot)}(A)=P_{y(\cdot)}(A)$  for any $A\in \mathcal{F}_{t_l-0}^U$ and $u_{x(\cdot)}(s)=u_{y(\cdot)}(s)$ for $s\in [0,t_l)$.

Note that the presented definition of strategy includes feedback strategies, and randomized feedback strategies.

A strategy of the second player is a 5-tuple $\mathfrak{v}=(\Omega^V,\mathcal{F}^V,\{\mathcal{F}^V_s\}_{s\in [t_0,T]},v_{x(\cdot)},P_{x(\cdot)}^V)$ satisfying conditions similar to the conditions of Definition \ref{def_u_strategy} with $v_{x(\cdot)}$ taking values in $V$.

\begin{De}\label{def_motion}
Let $(t_0,x_0)$ be an initial position and let $\mathfrak{u}=(\Omega^U,\mathcal{F}^U,\{\mathcal{F}^U_s\}_{s\in [0,T]},u_{x(\cdot)},P_{x(\cdot)}^U)$ and  $\mathfrak{v}=(\Omega^V,\mathcal{F}^V,\{\mathcal{F}^V_s\}_{s\in [t_0,T]},v_{x(\cdot)},P_{x(\cdot)}^V)$ be strategies of the first and the second players respectively. A 5-tuple $(\Omega^X,\mathcal{F}^X,\{\mathcal{F}^X_s\}_{s\in [t_0,T]},X(\cdot),P)$ is a \textit{realization of the motion generated by the strategies} $\mathfrak{u}$, $\mathfrak{v}$ \textit{and the initial position} $(t_0,x_0)$ if the following conditions hold true.
\begin{enumerate}
  \item $(\Omega^X,\mathcal{F}^X,\{\mathcal{F}^X_s\}_{s\in [t_0,T]})$ is a filtered space.
    \item $P$ is a probability on $(\Omega,\mathcal{F},\{\mathcal{F}_s\}_{s\in [t_0,T]})$, where $\Omega\triangleq\Omega^X\times \Omega^U\times \Omega^V$, $\mathcal{F}\triangleq\mathcal{F}^X\otimes\mathcal{F}^U\otimes\mathcal{F}^V$, $\mathcal{F}_s\triangleq\mathcal{F}^X_s\otimes\mathcal{F}^U_s\otimes\mathcal{F}^V_s$.
  \item $X(\cdot)$ is a $\{\mathcal{F}_s\}_{s\in [t_0,T]}$-adapted process on $(\Omega,\mathcal{F},\{\mathcal{F}_s\}_{s\in [t_0,T]})$ with values in $\mathbb{R}^d$.
  \item $X(t_0)=x_0$ $P$-a.s.
  \item The process
  \begin{equation}\label{dynkin}
  \varphi(X(t))-\int_{t_0}^tL^1_\tau[u(\tau),v(\tau)]\varphi(X(\tau))d\tau
  \end{equation} is a $\{\mathcal{F}_s\}_{s\in [t_0,T]}$-martingale. Here  $u$ and $v$ are stochastic processes defined by the rules
  $$u(\tau,\omega^X,\omega^U,\omega^V)\triangleq u_{X(\cdot,\omega^X,\omega^U,\omega^V)}(\tau,\omega^U),$$ $$ v(\tau,\omega^X,\omega^U,\omega^V)\triangleq v_{X(\cdot,\omega^X,\omega^U,\omega^V)}(\tau,\omega^V),$$
  where $(\omega^X,\omega^U,\omega^V)\in \Omega$;
  \item For any  $x(\cdot)\in \mathbb{D}_{t_0}$ and  any random variable $\zeta'$ on $(\Omega^U,\mathcal{F}^U,\{\mathcal{F}_s^U\}_{s\in [t_0,T]})$
      $$\mathbb{E}^U_{x(\cdot)}\zeta'=\mathbb{E}(\zeta'|X(\cdot)=x(\cdot)),$$ where $\mathbb{E}^U_{x(\cdot)}$ denotes the expectation corresponding to the probability $P^U_{x(\cdot)}$.
  \item  For any $x(\cdot)\in \mathbb{D}_{t_0}$ and any random variable $\zeta''$ on $(\Omega^V,\mathcal{F}^V,\{\mathcal{F}_s^V\}_{s\in [t_0,T]})$
      $$\mathbb{E}^V_{x(\cdot)}\zeta''=\mathbb{E}(\zeta''|X(\cdot)=x(\cdot)),$$ where $\mathbb{E}^V_{x(\cdot)}$ denotes the expectation corresponding to the probability $P^V_{x(\cdot)}$.
  \end{enumerate}
\end{De}
\begin{Remark}
If the strategies $\mathfrak{u}$ and $\mathfrak{v}$ are stepwise then there exists at least one realization of the corresponding motion. To show this consider the set of times of control correction $\{\tau_i\}_{i=1}^n$. The controls of the players are constant on each interval $[\tau_i,\tau_{i+1})$ and they are determined only by values $X(\tau_0),\ldots, X(\tau_i)$.  There exists a realization of the motion on each interval $[\tau_i,\tau_{i+1}]$. Combining this realization  one can construct the realization of the motion on the whole interval $[t_0,T]$.
\end{Remark}

Given the strategies $\mathfrak{u}$, $\mathfrak{v}$, the outcome is not defined in the unique way. The values
\begin{multline*}J^*(t_0,x_0,\mathfrak{u},\mathfrak{v})\triangleq \sup\{\mathbb{E}g(X(T)):
(\Omega^X,\mathcal{F},\{\mathcal{F}^X_s\}_{s\in [0,T]},X(\cdot),P)\mbox{ realizing a }\\ \mbox{motion generated by the strategies } \mathfrak{u}\mbox{ and }\mathfrak{v}\mbox{ and the initial position }(t_0,x_0)\},\end{multline*}
\begin{multline*}J_*(t_0,x_0,\mathfrak{u},\mathfrak{v})\triangleq \inf\{\mathbb{E}g(X(T)):
(\Omega^X,\mathcal{F},\{\mathcal{F}^X_s\}_{s\in [0,T]},X(\cdot),P)\mbox{ realizing a }\\ \mbox{motion generated by the strategies } \mathfrak{u}\mbox{ and }\mathfrak{v}\mbox{ and the initial position }(t_0,x_0)\}\end{multline*}
are the upper and lower outcomes according to the strategies $\mathfrak{u}$ and $\mathfrak{v}$. The upper value of the game is
$${\rm Val}_+(t_0,x_0)=\inf_{\mathfrak{u}}\sup_{\mathfrak{v}}J^*(x_0,\mathfrak{u},\mathfrak{v}).$$ The lower value is equal to
$${\rm Val}_-(t_0,x_0)=\sup_{\mathfrak{v}}\inf_{\mathfrak{u}}J_*(x_0,\mathfrak{u},\mathfrak{v}).$$ Obviously,
$${\rm Val}_-(t_0,x_0)\leq {\rm Val}_+(t_0,x_0). $$ Below we estimate this values using strategies based on the model of the game.

\section{Extremal shift for continuous-time Markov games}\label{sect_strategies}
If $A$ is a metric space then denote by ${\rm rpm}(A)$ the set of Radon probabilities on $A$. If $A$  is a compact then the ${\rm rpm}(A)$  is also a  compact \cite{Warga}.
Below if $\mu$ is a function with values in ${\rm rpm}(A)$, $t\in [0,T]$, $B\subset A$ we write $\mu(t,B)$ instead of $\mu(t)(B)$. If the function $\mu$ takes values in ${\rm rpm}(U)$ (respectively, in ${\rm rpm}(V)$) then it is called generalized control of the first (respectively, second) player.

\begin{De}\label{def_u_stable}
A function $c_+:\ir\rightarrow \mathbb{R}$ is said to be $u$-stable with respect to the generator $L^2$ if
\begin{enumerate}
\item $c_+(T,x)=g(x)$;
\item for any $t,\theta\in [0,T]$, $t<\theta$ there exists a filtered space $(\widetilde{\Omega}^{t,\theta},\widetilde{\mathcal{F}}^{t,\theta}, \{\widetilde{\mathcal{F}}^{t,\theta}_s\}_{s\in [t,\theta]})$ such that for any $\xi\in\mathbb{R}^d$, $v\in Q$ one can find a  $\{\widetilde{\mathcal{F}}_{s}^{t,\theta}\}_{s\in [t,\theta]}$-progressively measurable generalized control of the first player on $[t,\theta]$ $\mu^{t,\theta}_{\xi,v}$, a $\{\widetilde{\mathcal{F}}_{s}^{t,\theta}\}_{s\in [t,\theta]}$-adapted process $Y^{t,\theta}_{\xi,v}$ with values in $\mathbb{R}^d$, and a probability $\widetilde{P}^{t,\theta}_{\xi,v}$ on $\widetilde{\Omega}^{t,\theta}$ such that $Y^{t,\theta}_{\xi,v}(t)=\xi$ $\widetilde{P}^{t,\theta}_{\xi,v}$-a.s., for any $\varphi\in\mathcal{D}^2$
    \begin{equation}\label{L_2_dynkin}
   \varphi(Y^{t,\theta}_{\xi,v}(s))-\int_{t}^s\int_UL^2_\tau[w,v] \varphi(Y^{t,\theta}_{\xi,v}(\tau))\mu^{t,\theta}_{\xi,v}(\tau,dw)d\tau
    \end{equation} is a $\{\widetilde{\mathcal{F}}^{t,\theta}_s\}_{s\in [t,\theta]}$-martingale
    and
   \begin{equation}\label{L_2_u_stable}
    c_+(t,\xi)\geq \widetilde{\mathbb{E}}^{t,\theta}_{\xi,v}c_+(\theta,Y^{t,\theta}_{\xi,v}(\theta));
   \end{equation}
   \item for any random variable  $\phi$ on $\widetilde{\Omega}^{t,\theta}$ dependence of $\widetilde{E}^{t,\theta}_{\xi,v}\phi$ on  $\xi$ and $v$ is measurable;
   \item for any $\varphi\in \mathcal{D}^2$ the function $(\xi,v,s)\mapsto \widetilde{E}^{t,\theta}_{\xi,v}\varphi(Y^{t,\theta}_{\xi,v}(s))$ is measurable.
\end{enumerate}
Here $\widetilde{\mathbb{E}}^{t,\theta}_{\xi,v}$ denotes the expectation corresponding to the probability $\widetilde{P}^{t,\theta}_{\xi,v}$.
\end{De}

The proposed definition of $u$-stability generalizes the definition given by Krasovskii and Subbotion for differential games. This is proved in Proposition~\ref{pl_u_stable_1} below.  Theorem~\ref{th_estimate_first} provides the estimate of the function  ${\rm Val}_+(t_0,x_0)$ by the $u$-stable  function $c_+$.

To estimate the function ${\rm Val}_-(t_0,x_0)$ we will use $v$-stable functions.
\begin{De}\label{def_v_stable}
A function $c_-:\ir\rightarrow \mathbb{R}$ is  $v$-stable with respect to the generator $L^2$ if
\begin{enumerate}
\item $c_-(T,x)=g(x)$;
\item for any $t,\theta\in [0,T]$, $t<\theta$ there exists a filtered space $(\overline{\Omega}^{t,\theta},\overline{\mathcal{F}}^{t,\theta},\{\overline{\mathcal{F}}^{t,\theta}_s\}_{s\in [t,\theta]})$ such that for any $\xi\in\mathbb{R}^d$, $u\in U$ one can find a  $\{\overline{\mathcal{F}}_{s}^{t,\theta}\}_{s\in [t,\theta]}$-progressively measurable generalized control of the second player on $[t,\theta]$ $\mu^{t,\theta}_{\xi,u}$, a $\{\overline{\mathcal{F}}_{s}^{t,\theta}\}_{s\in [t,\theta]}$-adapted process $Y^{t,\theta}_{\xi,u}$ with values in $\mathbb{R}^d$, and a probability $\overline{P}^{t,\theta}_{\xi,u}$ on $\overline{\Omega}^{t,\theta}$ such that $Y^{t,\theta}_{\xi,u}(t)=\xi$ $\overline{P}^{t,\theta}_{\xi,u}$-a.s., for any $\varphi\in\mathcal{D}^2$
   $$
   \varphi(Y^{t,\theta}_{\xi,u}(s))-\int_{t}^s\int_VL^2_\tau[u,w] \varphi(Y^{t,\theta}_{\xi,u}(\tau))\mu^{t,\theta}_{\xi,u}(\tau,dw)d\tau
   $$
    is a $\{\overline{\mathcal{F}}^{t,\theta}_s\}_{s\in [t,\theta]}$-martingale
    and
   $$
    c_-(t,\xi)\leq \overline{\mathbb{E}}^{t,\theta}_{\xi,u}c_-(\theta,Y^{t,\theta}_{\xi,u}(\theta));
   $$
   \item for any random variable  $\phi$ on $\overline{\Omega}^{t,\theta}$ dependence of $\overline{E}^{t,\theta}_{\xi,u}\phi$ on  $\xi$ and $u$ is measurable;
   \item for any $\varphi\in \mathcal{D}^2$ the function $(\xi,u,s)\mapsto \overline{E}^{t,\theta}_{\xi,u}\varphi(Y^{t,\theta}_{\xi,u}(s))$ is measurable.
\end{enumerate}
Here $\overline{\mathbb{E}}^{t,\theta}_{\xi,u}$ denotes the expectation corresponding to the probability $\overline{P}^{t,\theta}_{\xi,u}$.
\end{De}

Given a $u$-stable function $c_+$ and a partition $\Delta=\{t_l\}_{l=0}^r$ of the interval $[t_0,T]$ we  define the stepwise strategy $\ues=(\Omega^U,\mathcal{F}^U,\{\mathcal{F}^U_s\}_{s\in [t_0,T]},u_{x(\cdot)},P_{x(\cdot)})$ by the rules~(\ref{u_opt_space_def}),~(\ref{u_opt_def}) (see below). To this end we need some additional notion.

If condition (L8)--(1) is fulfilled then put
  $$
    \varpi(t,z,\xi,u,v)\triangleq \langle z-\xi,b^1(t,z,u,v)\rangle,
  $$
otherwise, put
  $$
    \varpi(t,z,\xi,u,v)\triangleq \langle z-\xi,b^2(t,\xi,u,v)\rangle.
  $$
For $z,\xi\in\mathbb{R}^d$ let $u_l[z,\xi]$ and $v_l[z,\xi]$ satisfy the condition
\begin{multline*}
  \max_{v\in V}\varpi(t_l,z,\xi,u_l[z,\xi],v)=\min_{u\in U}\max_{v\in V}\varpi(t_l,z,\xi,u,v) \\
  =\max_{v\in V}\min_{u\in U}\varpi(t_l,z,\xi,u,v)=\min_{u\in U}\varpi(t_l,z,\xi,u,v_l[z,\xi]).
\end{multline*}
The functions $(z,\xi)\mapsto u_l[z,\xi]$ and $(z,\xi)\mapsto v_l[z,\xi]$ can be chosen to be measurable.

To define the strategy $\ues$ we construct a sequence of models of the game.
\begin{De}\label{def_model}
A 7-tuple $(\Gamma^l,\mathcal{G}^l,\{\mathcal{G}^l_{s}\}_{s\in [t_0,t_l]},P^l_{x(\cdot)},\hat{\mu}^l_{x(\cdot)},\hat{v}^l_{x(\cdot)},Y_{x(\cdot)}^l)$ is called a model of the game 
for the partition $\Delta$ and the number $l$ if  the following conditions hold true:
\begin{enumerate}
\item for each $x(\cdot)\in\mathbb{D}_{t_0}$ $P_{x(\cdot)}^l$ is a probability on $\Gamma^l$,  $\hat{\mu}_{x(\cdot)}^l$ is a $\{\mathcal{G}^l_s\}_{s\in [t_0,t_l]}$-progressively measurable generalized control of the first player, $\hat{v}_{x(\cdot)}^l$ is a $\{\mathcal{G}^l_s\}_{s\in [t_0,t_l]}$-progressively measurable control of the second player, whereas $Y_{x(\cdot)}^l$ is a c\`{a}dl\`{a}g $\{\mathcal{G}^l_s\}_{s\in [t_0,t_l]}$-adapted process with values in $\mathbb{R}^d$;
\item $Y_{x(\cdot)}^l(\tau)$, $\hat{\mu}^l_{x(\cdot)}(\tau,\cdot)$,  $\hat{v}^l_{x(\cdot)}(\tau)$ depend only on $x(t_0),\ldots,x(t_j)$ where $t_j$ is the greatest element of $\Delta$ such that $t_j\leq \tau$;
\item for any $k=\overline{0,l}$
and $\tau\in [t_k,t_{k+1})$
$$  \max_{v\in Q}\min_{u\in U}\varpi(t_k, x(t_k),Y_{x(\cdot)}^l(t_k),u,v)
  =\min_{u\in U}\varpi(t_k, x(t_k),Y_{x(\cdot)}^l(t_k),u,\hat{v}_{x(\cdot)}^l(\tau));
$$
\item
$ Y^l_{x(\cdot)}(t_0)=x(t_0)\ \ P^l_{x(\cdot)}-\mbox{a.s.},$
and
for any $\varphi\in\mathcal{D}^2$
$$
  \varphi(Y_{x(\cdot)}^l(s))- \int_{t_0}^s\int_U L_\tau^2[w,\hat{v}^l_{x(\cdot)}(\tau)]\varphi(Y^l_{x(\cdot)}(\tau))\hat{\mu}^l_{x(\cdot)}(\tau,dw)d\tau
$$
is a martingale;
\item $\mathbb{E}_{x(\cdot)}^lc_+(t_0,Y^l_{x(\cdot)}(t_0))\geq\ldots\geq \mathbb{E}_{x(\cdot)}^lc_+(t_l,Y_{x(\cdot)}^l(t_l)),$ where $\mathbb{E}^l_{x(\cdot)}$ denotes the expectation corresponding to the probability $P_{x(\cdot)}^l$.
\end{enumerate}
\end{De}

\begin{Pl} Assume that $c_+$ is $u$-stable with respect to generator $t_l$. Additionally, let $\Delta=\{t_l\}_{l=0}^r$ be a   partition of $[t_0,T]$. Then for any $l=1,\ldots,r$ there exists a model for the partition $\Delta$ and the number $l$. 
\end{Pl}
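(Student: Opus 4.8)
The plan is to construct the model by induction on $l$, pasting together, over the subintervals $[t_k,t_{k+1}]$ of $\Delta$, the building blocks supplied by the $u$-stability of $c_+$. Recall that Definition~\ref{def_u_stable} provides, for each pair $t<\theta$, a single filtered space $(\widetilde\Omega^{t,\theta},\widetilde{\mathcal F}^{t,\theta},\{\widetilde{\mathcal F}^{t,\theta}_s\})$ carrying, for every $\xi\in\mathbb R^{d}$ and $v\in V$, a progressively measurable generalized control $\mu^{t,\theta}_{\xi,v}$ of the first player, an adapted process $Y^{t,\theta}_{\xi,v}$ with $Y^{t,\theta}_{\xi,v}(t)=\xi$, and a probability $\widetilde P^{t,\theta}_{\xi,v}$, such that the process~(\ref{L_2_dynkin}) is a martingale, the one-step inequality~(\ref{L_2_u_stable}) holds, and, by clauses (3)--(4) of Definition~\ref{def_u_stable}, the relevant expectations depend measurably on $(\xi,v)$.

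For the base case $l=1$ apply this with $t=t_0$, $\theta=t_1$; for a path $x(\cdot)$ put $\xi=x(t_0)$ and $v=v_0[x(t_0),x(t_0)]$, where $v_k[\cdot,\cdot]$ is the measurable extremal selector introduced above, and set $\Gamma^{1}=\widetilde\Omega^{t_0,t_1}$ with its filtration, $P^{1}_{x(\cdot)}=\widetilde P^{t_0,t_1}_{\xi,v}$, $Y^{1}_{x(\cdot)}=Y^{t_0,t_1}_{\xi,v}$, $\hat\mu^{1}_{x(\cdot)}=\mu^{t_0,t_1}_{\xi,v}$, and $\hat v^{1}_{x(\cdot)}(\tau)=v_0[x(t_0),x(t_0)]$ on $[t_0,t_1)$. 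Then conditions (1), (2), (4) of Definition~\ref{def_model} are immediate, (3) is the defining identity of $v_0[x(t_0),x(t_0)]$ (recall $Y^{1}_{x(\cdot)}(t_0)=x(t_0)$), and (5) is exactly~(\ref{L_2_u_stable}).

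For the inductive step, assume a model for the number $l-1$ on $[t_0,t_{l-1}]$ is given, apply Definition~\ref{def_u_stable} with $t=t_{l-1}$, $\theta=t_l$, and put $\Gamma^{l}=\Gamma^{l-1}\times\widetilde\Omega^{t_{l-1},t_l}$, with $\{\mathcal G^{l}_s\}$ obtained by appending the trivial $\sigma$-algebra on $\widetilde\Omega^{t_{l-1},t_l}$ for $s<t_{l-1}$ and by the product with $\widetilde{\mathcal F}^{t_{l-1},t_l}_s$ for $s\in[t_{l-1},t_l]$. Writing $\xi(\gamma)=Y^{l-1}_{x(\cdot)}(t_{l-1},\gamma)$ and $v(\gamma)=v_{l-1}[x(t_{l-1}),\xi(\gamma)]$, let $P^{l}_{x(\cdot)}$ be the semidirect product of $P^{l-1}_{x(\cdot)}$ with the transition kernel $\gamma\mapsto\widetilde P^{t_{l-1},t_l}_{\xi(\gamma),v(\gamma)}$, i.e.
\[
P^{l}_{x(\cdot)}(A\times B)=\int_A\widetilde P^{t_{l-1},t_l}_{\xi(\gamma),v(\gamma)}(B)\,P^{l-1}_{x(\cdot)}(d\gamma),\qquad A\in\mathcal G^{l-1},\ B\in\widetilde{\mathcal F}^{t_{l-1},t_l};
\]
this is well posed since $\gamma\mapsto\widetilde P^{t_{l-1},t_l}_{\xi(\gamma),v(\gamma)}(B)$ is measurable by clause (3) of Definition~\ref{def_u_stable}, the adaptedness of $Y^{l-1}_{x(\cdot)}$, and the measurability of $v_{l-1}[\cdot,\cdot]$. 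Put $Y^{l}_{x(\cdot)}=Y^{l-1}_{x(\cdot)}$ on $[t_0,t_{l-1}]$ and $Y^{l}_{x(\cdot)}(s,\gamma,\omega)=Y^{t_{l-1},t_l}_{\xi(\gamma),v(\gamma)}(s,\omega)$ on $[t_{l-1},t_l]$ (these agree at $s=t_{l-1}$ since $Y^{t_{l-1},t_l}_{\xi,v}(t_{l-1})=\xi$), and let $\hat\mu^{l}_{x(\cdot)}$, $\hat v^{l}_{x(\cdot)}$ coincide with the $(l-1)$-objects on $[t_0,t_{l-1})$ and equal $\mu^{t_{l-1},t_l}_{\xi,v}$, $v_{l-1}[x(t_{l-1}),\xi]$ on $[t_{l-1},t_l)$; progressive measurability of $\hat\mu^{l}_{x(\cdot)}$ on the last interval uses a jointly measurable version of $(\xi,v,\tau,\omega)\mapsto\mu^{t_{l-1},t_l}_{\xi,v}(\tau,\omega)$, which exists by a standard measurable selection.

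It remains to verify (1)--(5) of Definition~\ref{def_model}. Conditions (1)--(2) hold by the construction and the induction hypothesis, as nothing introduced at step $l$ depends on $x(\cdot)$ beyond $x(t_0),\dots,x(t_{l-1})$; (3) for $k\le l-2$ is inherited, and for $k=l-1$ it is the defining identity of $v_{l-1}[x(t_{l-1}),Y^{l}_{x(\cdot)}(t_{l-1})]$. Condition (4) follows by concatenation of martingales: on $[t_0,t_{l-1}]$ the process in question is the pull-back of the martingale of the model $l-1$, still a $\{\mathcal G^{l}_s\}$-martingale because the appended factor is trivial there, whereas on $[t_{l-1},t_l]$, conditionally on $\mathcal G^{l}_{t_{l-1}}$ (which fixes $\xi$ and $v$), its increment over $[t_{l-1},s]$ is a martingale by~(\ref{L_2_dynkin}); the standard gluing lemma for martingales over two adjacent intervals then yields the martingale property on $[t_0,t_l]$. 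For condition (5), the inequalities with $k\le l-2$ are inherited (the $\Gamma^{l-1}$-marginal of $P^{l}_{x(\cdot)}$ is $P^{l-1}_{x(\cdot)}$), and for $k=l-1$, conditioning on $\mathcal G^{l}_{t_{l-1}}$ and applying~(\ref{L_2_u_stable}) gives
\begin{multline*}
\mathbb E^{l}_{x(\cdot)}c_+(t_l,Y^{l}_{x(\cdot)}(t_l))=\mathbb E^{l-1}_{x(\cdot)}\widetilde{\mathbb E}^{t_{l-1},t_l}_{\xi,v}c_+(t_l,Y^{t_{l-1},t_l}_{\xi,v}(t_l))\\
\le\mathbb E^{l-1}_{x(\cdot)}c_+(t_{l-1},\xi)=\mathbb E^{l}_{x(\cdot)}c_+(t_{l-1},Y^{l}_{x(\cdot)}(t_{l-1})).
\end{multline*}
The main obstacle is the measurable pasting in the definition of $P^{l}_{x(\cdot)}$ — handling the fact that each building block is started from the random point $\xi(\gamma)$ delivered by the previous step — together with the concatenation argument for condition~(4); both become routine once the measurability clauses (3)--(4) of Definition~\ref{def_u_stable} are invoked.
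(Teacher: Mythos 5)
Your construction is correct and essentially identical to the paper's: an inductive pasting of the $u$-stability building blocks over the partition, with the extremal selector $v_{l}[\,\cdot\,,\cdot\,]$ frozen on each subinterval and the new probability obtained as a semidirect product of the previous model with the kernel $\gamma\mapsto\widetilde P^{t_{l-1},t_l}_{\xi(\gamma),v(\gamma)}$, which coincides with the paper's formula written via disintegration over the law of $Y^{l-1}_{x(\cdot)}(t_{l-1})$. Your verification of conditions (1)--(5), in particular the martingale concatenation and the tower-property argument for the monotonicity in condition (5), only makes explicit what the paper leaves as ``by construction.''
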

\begin{proof}
We construct the models  inductively.

First, put $\Gamma^1\triangleq \widetilde{\Omega}^{t_0,t_1}$, $\mathcal{G}^1\triangleq \widetilde{\mathcal{F}}^{t_0,t_1}$, $\mathcal{G}^1_s\triangleq \widetilde{\mathcal{F}}^{t_0,t_1}_s$.  For $\tau\in [t_0,t_1]$ set $$ \hat{v}^1_{x(\cdot)}(\tau)\triangleq v_0[x(t_0),x(t_0)].$$   Put $$\hat{\mu}^1_{x(\cdot)}(\tau,\cdot)\triangleq \mu^{t_0,t_1}_{x(t_0),v^1}(\tau,\cdot),\ \  Y_{x(\cdot)}^1(\tau)\triangleq Y^{t_0,t_1}_{x(t_0),v^1}(\tau),\ \ P^1_{x(\cdot)}\triangleq \widetilde{P}^{t_0,t+1}_{x(t_0),v^1}$$ for $v^1=v_0[x(t_0),x(t_0)]$. Obviously, $(\Gamma^1,\mathcal{G}^1,\{\mathcal{G}^l_s\}_{s\in [t_0,t_1]},P^1_{x(\cdot)},\hat{\mu}^1_{x(\cdot)},\hat{v}^1_{x(\cdot)},Y^1_{x(\cdot)})$ is a model at $t^1$.

Now assume that the model is constructed for the number $l$. Define the model for  $l+1$ in the following way. Put $\Gamma^{l+1}\triangleq \Gamma^l\times \widetilde{\Omega}^{t_l,t_{l+1}}$,
$$\mathcal{G}^{l+1}_s\triangleq
    \left\{\begin{array}{cc}
        \mathcal{G}^l_s\otimes \widetilde{\mathcal{F}}^{t_l,t_{l+1}}_{t_l}, & s\in [t_0,t_l] \\
        \mathcal{G}^{l}\otimes \widetilde{\mathcal{F}}^{t_l,t_{l+1}}_{s}, & s\in (t_l,t_{l+1}]
                                      \end{array}
\right.$$
$\mathcal{G}^{l+1}\triangleq \mathcal{G}^{l+1}_{t_{l+1}}$.
Now let $x(\cdot)\in\mathbb{D}_{t_0}$. For $\gamma\in \Gamma^l$, $\omega\in \widetilde{\Omega}^{t_l,t_{l+1}}$ put
$$\hat{v}^{l+1}_{x(\cdot)}(\tau,\gamma,\omega)\triangleq \left\{
\begin{array}{cc}
  \hat{v}^l_{x(\cdot)}(\tau,\gamma), & \tau\in [t_0,t_l), \\
  v_l[x(t_l),Y^l_{x(\cdot)}(t_l,\gamma)], & \tau\in [t_l,t_{l+1}].
\end{array}
\right.
 $$
Choosing $v^{l+1}(\gamma)=v_l[x(t_l),Y^l_{x(\cdot)}(t_l,\gamma)]$, $y^l(\gamma)=Y^l_{x(\cdot)}(t_l,\gamma)$ put
$$
\hat{\mu}^{l+1}_{x(\cdot)}(\tau,\gamma,\omega)\triangleq \left\{
\begin{array}{cc}
  \hat{\mu}^l_{x(\cdot)}(\tau,\gamma), & \tau\in [t_0,t_l), \\
  {\mu}^{t_l,t_{l+1}}_{y^l(\gamma),v^{l+1}(\gamma)}(\tau,\omega), & \tau\in [t_l,t_{l+1}],
\end{array}
\right.
$$
$$
{Y}^{l+1}_{x(\cdot)}(\tau,\gamma,\omega)\triangleq \left\{
\begin{array}{cc}
  {Y}^l_{x(\cdot)}(\tau,\gamma), & \tau\in [t_0,t_l), \\
  Y^{t_l,t_{l+1}}_{y^l(\gamma),v^{l+1}(\gamma)}(\tau,\omega), & \tau\in [t_l,t_{l+1}].
\end{array}
\right.
$$
Finally, let $m^l_{x(\cdot)}$ be a probability on $\mathbb{R}^d$ defined by the rule $m^l_{x(\cdot)}(Z)\triangleq P^l_{x(\cdot)}\{\gamma\in\Gamma^l:Y^l_{x(\cdot)}(t_l,\gamma)\in Z\}$.
If $A\in \Gamma^l$, $B\in \widetilde{\Omega}^{t_l,t_{l+1}}$ then put
$$P^{l+1}_{x(\cdot)}(A\times B)=\intrd P^l_{x(\cdot)}(A|Y_{x(\cdot)}^l(t_l)=z) \widetilde{P}^{t_l,t_{l+1}}_{z,v_l[x(t_l),z]}(B)m^l_{x(\cdot)}(dz). $$
By construction the 7-tuple $(\Gamma^{l+1},\mathcal{G}^{l+1},\{\mathcal{G}^{l+1}_{s}\}_{s\in [t_0,t_l]},P^{l+1}_{x(\cdot)},u_{x(\cdot)}^{l+1},\hat{\mu}^{l+1}_{x(\cdot)},\hat{v}^{l+1}_{x(\cdot)},Y_{x(\cdot)}^{l+1})$ is a model for $l+1$.
\end{proof}

To define the strategy $\ues$ consider $(\Gamma^{r},\mathcal{G}^{r},\{\mathcal{G}^{r}_{s}\}_{s\in [t_0,t_l]},P^{r}_{x(\cdot)},u_{x(\cdot)}^{r}, \hat{\mu}^{r}_{x(\cdot)},\hat{v}^{l+1}_{x(\cdot)},Y_{x(\cdot)}^{r})$ that is the model of the game for the partition $\Delta$ and the number $r$. The strategy $\hat{\mathfrak{u}}=(\Omega^U,\mathcal{F}^U,\{\mathcal{F}^U_s\}_{s\in [t_0,T]},u_{x(\cdot)},P_{x(\cdot)}^U)$ is defined by the rules
\begin{equation}\label{u_opt_space_def}
\Omega^U\triangleq\Gamma^r,\ \ \mathcal{F}^U\triangleq\mathcal{G}^r,\ \ \mathcal{F}_s^U\triangleq\mathcal{G}^r_s, \ \  P^U_{x(\cdot)}\triangleq P^r_{x(\cdot)},
\end{equation}
 for $\tau\in [t_l,t_{l+1})$
\begin{equation}\label{u_opt_def}
u_{x(\cdot)}(\tau,\omega^U)\triangleq u_l[x(t_l),Y^r(t_l,\omega^U)].
\end{equation}

Below we use the following designations:
\begin{equation}\label{kappa_def}\varkappa\triangleq \sup_{t\in [0,T],x\in\mathbb{R}^d,u\in U,v\in V}\|b^1(t,x,u,v)-b^2(t,x,u,v)\|^2\end{equation}
\begin{equation}\label{theta_def}
\Theta=\varkappa+M_0^1+M_0^2,
\end{equation} where constants $M_0^1,M_0^2$ are introduced in condition (L5). Further, set
\begin{equation}\label{beta_def_i}
  \beta\triangleq 2+2K^i.
\end{equation} In formula (\ref{beta_def_i}) $i=1$ if (L8)--(1) is fulfilled and $i=2$ in the opposite case,  $K^i$, $i=1,2$, are Lipschitz constants for functions $x\mapsto b^i(t,x,u,v)$ (see condition (L6)). Additionally, put
\begin{equation}\label{C_def}
C\triangleq\sqrt{Te^{\beta T}}.
\end{equation}

Recall that the payoff function $g$ is Lipschitz continuous with constant~$R$.

\begin{Th}\label{th_estimate_first}
If $c_+$ is $u$-stable with respect to $L^2$ then for any $(t_0,x_0)\in \ir$
$$\lim_{\delta\downarrow 0}\sup\{J(t_0,x_0,\ues,\mathfrak{v}):d(\Delta)\leq \delta\}\leq c_+(t_0,x_0)+ R\cdot C\sqrt{\Theta}.$$
\end{Th}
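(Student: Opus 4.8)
The plan is to fix an admissible strategy $\mathfrak{v}$ of the second player, an initial position $(t_0,x_0)$, and a partition $\Delta=\{t_l\}_{l=0}^r$, and to control the quantity $\mathbb{E}g(X(T))$ for an arbitrary realization of the motion generated by $\ues$ and $\mathfrak{v}$. The central object is a coupling between the realized trajectory $X(\cdot)$ of the first game and the model trajectory $Y(\cdot)=Y^r_{X(\cdot)}(\cdot)$ produced by the model of Definition~\ref{def_model} for the number $r$: on each step $[t_l,t_{l+1})$ both the $U$-control used in the first game and the generalized $U$-control driving the model are frozen according to $u_l[X(t_l),Y(t_l)]$, while the second player acts by $\mathfrak{v}$ in the real system and by the extremal $\hat v^r_{X(\cdot)}$ in the model. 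I would introduce the Lyapunov-type function $w(t)\triangleq\mathbb{E}\|X(t)-Y(t)\|^2$ and estimate its growth across one partition step using the Dynkin formula~(\ref{dynkin}) for $X$, the martingale property~(\ref{L_2_dynkin}) for $Y$, and the action~(\ref{action_quadratic}) of the generators on $\vartheta_a$.

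The key computation is the one-step estimate. Applying~(\ref{action_quadratic}) to $\vartheta_{Y(t_l)}$ along $X$ and to $\vartheta_{X(t_l)}$ along $Y$, and then a standard decomposition $\|X(s)-Y(s)\|^2 = \|X(s)-X(t_l)\|^2 + \|Y(s)-Y(t_l)\|^2 + \ldots$ together with a careful bookkeeping of cross terms, one finds that the increment of $w$ over $[t_l,t_{l+1}]$ is controlled by: a term $(M_0^1+M_0^2)(t_{l+1}-t_l)$ coming from $\Sigma^1,\Sigma^2$; a term of order $\varkappa$ and of $(t_{l+1}-t_l)$ from replacing $b^1$ by $b^2$ (cf.\ definitions~(\ref{kappa_def}),~(\ref{theta_def})); a Lipschitz-in-space term $K^i\|X(t_l)-Y(t_l)\|^2$-type contribution (cf.\ (\ref{beta_def_i})); plus higher-order-in-$\delta$ remainders controlled via conditions (L4),(L5). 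The crucial point where the extremal shift enters is the bilinear/saddle term $\mathbb{E}\langle X(t_l)-Y(t_l),\, b^?(t_l,\cdot,u_l,\hat v^r)-b^?(t_l,\cdot,u_l,v(\tau))\rangle$: by the choice of $u_l[z,\xi]$ and $\hat v^r_{X(\cdot)}$ as a saddle point of $\varpi$ in Definition~\ref{def_model}(3), together with the Isaacs condition (L8), this term is non-positive (up to the displacement error $K^i(t_{l+1}-t_l)$-type corrections caused by $X(\tau),Y(\tau)$ drifting away from $X(t_l),Y(t_l)$ within the step). Summing over $l$ and applying the discrete Gr\"onwall inequality with exponent $\beta$ from~(\ref{beta_def_i}) yields $w(T)=\mathbb{E}\|X(T)-Y(T)\|^2 \leq T e^{\beta T}\Theta + o(1) = C^2\Theta + o(1)$ as $d(\Delta)\to 0$, with $C$ as in~(\ref{C_def}).

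To conclude, I would combine this with the $u$-stability of $c_+$. Property~5 of Definition~\ref{def_model} gives $\mathbb{E}^r_{X(\cdot)}c_+(t_r,Y^r_{X(\cdot)}(t_r)) \leq \mathbb{E}^r_{X(\cdot)}c_+(t_0,Y^r_{X(\cdot)}(t_0)) = c_+(t_0,x_0)$; since $t_r=T$ and $c_+(T,\cdot)=g$, taking expectation over $x(\cdot)=X(\cdot)$ (using conditions 6--7 of Definition~\ref{def_motion} for the consistency of the conditional laws) yields $\mathbb{E}g(Y(T))\leq c_+(t_0,x_0)$. Then the Lipschitz property (L7) of $g$ gives
\[
\mathbb{E}g(X(T)) \leq \mathbb{E}g(Y(T)) + R\,\mathbb{E}\|X(T)-Y(T)\| \leq c_+(t_0,x_0) + R\sqrt{\mathbb{E}\|X(T)-Y(T)\|^2},
\]
by Jensen, and the square-root of the one-step-summed bound is $C\sqrt{\Theta}+o(1)$. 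Taking the supremum over realizations and over $\mathfrak{v}$, then letting $d(\Delta)=\delta\downarrow 0$, gives the claimed inequality.

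The main obstacle I anticipate is the one-step saddle-point estimate: one must show that the favorable sign of the extremal-shift term survives the passage from the discretization points $t_l$ to the running times $\tau\in[t_l,t_{l+1})$, i.e.\ that the errors incurred by evaluating $b^1$ (resp.\ $b^2$) at $X(\tau),Y(\tau)$ rather than at $X(t_l),Y(t_l)$ are genuinely $o(1)$ after summation — this is where conditions (L4)--(L6) and the boundedness of $\Sigma^i$ must be used quantitatively, and where the two cases of the Isaacs condition (L8) have to be handled by the two different definitions of $\varpi$. A secondary technical point is justifying the measurability and martingale manipulations for the coupled process $(X,Y)$ on the product probability space, which relies on the stepwise structure of $\ues$ and the measurability clauses 3--4 in Definition~\ref{def_u_stable} and Definition~\ref{def_model}.
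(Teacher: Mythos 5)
Your proposal follows essentially the same route as the paper: couple $X$ with the model trajectory $Y$ of Definition~\ref{def_model}, obtain the one-step estimate for $\mathbb{E}\|X(t_{l+1})-Y(t_{l+1})\|^2$ from the Dynkin/martingale identities, the moment bounds and the extremal-shift saddle inequality for $\varpi$ (with the two cases of (L8) handled by the two definitions of $\varpi$), iterate with a Gronwall-type argument to get $\mathbb{E}\|X(T)-Y(T)\|\leq C\sqrt{\Theta}+o(1)$, and finish via the monotonicity of $\mathbb{E}c_+(t_l,Y(t_l))$, the Lipschitz continuity of $g$ and Jensen's inequality — exactly the chain Lemmas~\ref{lm_stability}--\ref{lm_kras_subb} and the paper's proof of Theorem~\ref{th_estimate_first} carry out. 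The only slip is cosmetic: the model's generalized $U$-control is the one supplied by the $u$-stability property (Definition~\ref{def_u_stable}), not frozen at $u_l[X(t_l),Y(t_l)]$ (only the control in the real game is), but this does not affect your argument since the key saddle estimate is uniform in the model's $U$-variable integrated against $\hat{\mu}$.
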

\begin{corollary}\label{crl_gamma_1}If $c_+$ is $u$-stable with respect to $L^2$ then for any $(t_0,x_0)\in \ir$
$${\rm Val}_+(t_0,x_0)\leq c_+(t_0,x_0)+R\cdot C\sqrt{\Theta}.$$
\end{corollary}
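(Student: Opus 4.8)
The plan is to bound the outcome $J(t_0,x_0,\ues,\mathfrak{v})$ along any realization of the motion generated by $\ues$ and an arbitrary strategy $\mathfrak{v}$ of the second player, by comparing the real trajectory $X(\cdot)$ with the model trajectory $Y^r(\cdot)$ that is built into the definition of $\ues$. Since $g$ is Lipschitz with constant $R$ and $c_+(T,\cdot)=g$, it suffices to show that $\mathbb{E}g(X(T))\le \mathbb{E}c_+(t_l,Y^r(t_l))$ up to a term controlled by $\mathbb{E}\|X(T)-Y^r(T)\|$, then invoke the chain $\mathbb{E}c_+(t_0,Y^r(t_0))\ge\dots\ge\mathbb{E}c_+(t_r,Y^r(t_r))$ from condition~5 in Definition~\ref{def_model} together with $Y^r(t_0)=x_0$. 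Thus the crux is an estimate $\mathbb{E}\|X(T)-Y^r(T)\|^2\le C^2\Theta$, so that $R\,\mathbb{E}\|X(T)-Y^r(T)\|\le R\,C\sqrt{\Theta}$ by Jensen; the corollary then follows immediately by taking the infimum over $\ues$ (i.e. over partitions $\Delta$) on the left and using $\limsup_{\delta\downarrow0}\sup\{J:d(\Delta)\le\delta\}\le c_+(t_0,x_0)+RC\sqrt\Theta$.

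The core computation I would carry out is a Gronwall-type estimate on $\Phi(s)\triangleq\mathbb{E}\|X(s)-Y^r(s)\|^2$. Applying the martingale property~(\ref{dynkin}) for the real motion to the test function $\vartheta_a$ with $a=Y^r(s)$ frozen, and the analogous martingale property from Definition~\ref{def_model}(4) for the model (which runs under generator $L^2$ with the generalized control $\hat\mu^r$), and using formula~(\ref{action_quadratic}) for $L^i_t[u,v]\vartheta_a$, one gets
\begin{equation*}
\frac{d}{ds}\Phi(s)\le \mathbb{E}\Sigma^1(s,X(s),u(s),v(s))+\mathbb{E}\int_U\Sigma^2(s,Y^r(s),w,\hat v^r(s))\hat\mu^r(s,dw)+2\,\mathbb{E}\big[\text{cross terms}\big].
\end{equation*}
The two $\Sigma^i$ contributions are bounded by $M_0^1+M_0^2$ via condition~(L5). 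The cross terms are of the form $2\langle X(s)-Y^r(s),\,b^1(s,X(s),u(s),v(s))-\int_U b^2(s,Y^r(s),w,\hat v^r(s))\hat\mu^r(s,dw)\rangle$, and here the extremal-shift choice of $u_l$ and $\hat v^r$ (satisfying the saddle-point relations for $\varpi$) is exactly what makes this term manageable: on each step $[t_l,t_{l+1})$ the definition of $u_l[x(t_l),Y^r(t_l)]$ and $\hat v^r$ guarantees $\min_u\max_v\varpi=\max_v\min_u\varpi$, so the relevant inner product is dominated by $\beta\|X(s)-Y^r(s)\|^2$ plus a remainder accounting for (i)~the time-increment error from freezing arguments at $t_l$ (controlled by $\alpha_1^i$ and the drift bound $M_1^i$, which vanishes as $d(\Delta)\to0$), and (ii)~the mismatch $\|b^1-b^2\|^2\le\varkappa$ between the two generators. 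Combining: $\frac{d}{ds}\Phi(s)\le \beta\,\Phi(s)+\Theta+o_\Delta(1)$, whence by Gronwall $\Phi(T)\le (\Theta+o_\Delta(1))\,\frac{e^{\beta T}-1}{\beta}\le Te^{\beta T}(\Theta+o_\Delta(1))=C^2(\Theta+o_\Delta(1))$.

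The main obstacle, and the step needing the most care, is the handling of the cross term under the Isaacs condition. Depending on whether (L8)--(1) or (L8)--(2) holds, $\varpi$ is defined using $b^1(t,z,u,v)$ or $b^2(t,\xi,u,v)$, and the extremal-shift argument must be split accordingly: in case~(1) one writes $\langle X(s)-Y^r(s),b^1(s,X(s),u(s),v(s))\rangle$ directly and exploits the saddle point in $b^1$ while estimating the model's $b^2$-drift against $b^1$ via $\varkappa$ and the Lipschitz constant $K^2$; in case~(2) the roles are reversed and one uses the saddle point in $b^2$ evaluated at the model point $Y^r(t_l)$. In both cases one must track that the control $u(s)$ actually played equals $u_l[x(t_l),Y^r(t_l)]$ by~(\ref{u_opt_def}), that $\hat v^r(s)$ on $[t_l,t_{l+1})$ realizes the inner minimum as in Definition~\ref{def_model}(3), and that passing from pointwise (in $z,\xi$) saddle relations to the expectation requires the measurability clauses in Definitions~\ref{def_u_stable} and~\ref{def_model} so that Fubini applies. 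Finally, one must argue that the $o_\Delta(1)$ error, collected over the $r$ partition intervals, is genuinely $o(1)$ as $d(\Delta)\to0$ (it is a sum of $r$ terms each $O(d(\Delta))$ in the relevant modulus, giving $O(T\cdot\text{modulus}(d(\Delta)))$), which is precisely why the statement is phrased with $\lim_{\delta\downarrow0}$ rather than an estimate for a fixed partition.
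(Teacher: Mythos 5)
Your proposal is correct and follows essentially the same route as the paper: the paper derives this corollary in one line from Theorem \ref{th_estimate_first} (since ${\rm Val}_+(t_0,x_0)\leq\sup_{\mathfrak{v}}J^*(t_0,x_0,\ues,\mathfrak{v})$ for every partition $\Delta$), and your sketch reproduces exactly the paper's proof of that theorem — the per-step comparison of $X$ with the model process $Y$ via the martingale properties and (\ref{action_quadratic}), the extremal-shift/Isaacs treatment of the cross term split according to (L8)--(1) or (L8)--(2), the Gronwall-type iteration giving $\mathbb{E}\|X(T)-Y(T)\|^2\leq C^2(\Theta+o_\Delta(1))$, then Jensen, the Lipschitz bound on $g$, and the monotonicity chain for $c_+$ from Definition \ref{def_model}. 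The only cosmetic difference is that you phrase the estimate as a differential inequality for $\Phi(s)$, whereas the paper works with the discrete partition-step estimate (Lemma \ref{lm_kras_subb}), which is how one rigorously handles the cross-increment term between $X$ and $Y$; your plan already contains the ingredients to do it that way.
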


\begin{corollary}\label{crl_gamma_2}If $c_-$ is $v$-stable with respect to $L^2$ then for any $(t_0,x_0)\in \ir$
$$ c_-(t_0,x_0)-R\cdot C\sqrt{\Theta}\leq {\rm Val}_-(t_0,x_0).$$
\end{corollary}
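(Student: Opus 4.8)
The plan is to deduce Corollary~\ref{crl_gamma_2} from the already-established Corollary~\ref{crl_gamma_1} by a duality argument that interchanges the two players and negates the payoff. First I would introduce the \emph{auxiliary game} with the same state space, terminal time $T$ and terminal function $\tilde g\triangleq -g$, but with control sets $\tilde U\triangleq V$, $\tilde V\triangleq U$ and generators $\tilde L^i_\tau[\tilde u,\tilde v]\triangleq L^i_\tau[\tilde v,\tilde u]$, so that $\tilde b^i(t,x,\tilde u,\tilde v)=b^i(t,x,\tilde v,\tilde u)$ and $\tilde\Sigma^i(t,x,\tilde u,\tilde v)=\Sigma^i(t,x,\tilde v,\tilde u)$. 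I would then check that this game again satisfies (L1)--(L8) with unchanged constants. Conditions (L1)--(L7) are immediate, since compactness, continuity, the bounds $M_0^i,M_1^i$ and the Lipschitz constants $K^i$ involve the controls only as labels and $\tilde g$ has the same Lipschitz constant $R$. Consequently $\tilde\varkappa=\varkappa$ and $\tilde M_0^i=M_0^i$, hence $\tilde\Theta=\Theta$, while $\tilde K^i=K^i$.

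The delicate point is (L8). Assuming (L8)--(1) holds in the original game, the Isaacs equality needed in the auxiliary game for $\tilde b^1$ is $\min_{\tilde u}\max_{\tilde v}\langle\zeta,\tilde b^1\rangle=\max_{\tilde v}\min_{\tilde u}\langle\zeta,\tilde b^1\rangle$; substituting $\tilde b^1(t,x,\tilde u,\tilde v)=b^1(t,x,\tilde v,\tilde u)$ and renaming the variables this becomes the \emph{swapped} equality $\min_{v}\max_{u}\langle\zeta,b^1\rangle=\max_{u}\min_{v}\langle\zeta,b^1\rangle$, which is not literally (L8)--(1). The key observation is that applying (L8)--(1) at $-\xi$ and using $\max_v\langle-\xi,b^1\rangle=-\min_v\langle\xi,b^1\rangle$ turns (L8)--(1) at $-\xi$ into exactly the swapped equality at $\xi$. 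Since (L8)--(1) is imposed for \emph{every} $\xi\in\mathbb{R}^d$, in particular for $-\xi$, the swapped equality holds for every $\xi$, so the auxiliary game satisfies its Isaacs condition; the case (L8)--(2) is identical. In particular the index $i$ selected in~\eqref{beta_def_i} is preserved, confirming $\tilde\beta=\beta$ and $\tilde C=C$.

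Next I would match the stability notions. Comparing Definition~\ref{def_v_stable} for $c_-$ with Definition~\ref{def_u_stable} applied to the auxiliary generator $\tilde L^2$, I claim that $\tilde c_+\triangleq -c_-$ is $u$-stable with respect to $\tilde L^2$: the terminal condition $\tilde c_+(T,\cdot)=\tilde g$ is equivalent to $c_-(T,\cdot)=g$; the generator in the martingale requirement satisfies $\int_{\tilde U}\tilde L^2_\tau[\tilde w,\tilde v]\varphi\,\mu(\tau,d\tilde w)=\int_V L^2_\tau[\tilde v,\tilde w]\varphi\,\mu(\tau,d\tilde w)$, which coincides with the integral in the $v$-stability of $c_-$ once $\tilde v\in\tilde V=U$ is read as the parameter $u$; and the inequality $\tilde c_+(t,\xi)\ge\widetilde{\mathbb E}\,\tilde c_+(\theta,Y(\theta))$ is precisely $c_-(t,\xi)\le\overline{\mathbb E}\,c_-(\theta,Y(\theta))$, the measurability items transferring verbatim. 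Finally, interchanging the players leaves the set of realizations of the motion unchanged, because the defining martingale property~\eqref{dynkin} only relabels the controls; hence $J^*_{\mathrm{aux}}=-J_*$ on corresponding strategy pairs and the upper value $\mathrm{Val}^{\mathrm{aux}}_+$ of the auxiliary game equals $-\mathrm{Val}_-(t_0,x_0)$. Applying Corollary~\ref{crl_gamma_1} to the auxiliary game with the $u$-stable function $\tilde c_+$ yields $-\mathrm{Val}_-(t_0,x_0)\le -c_-(t_0,x_0)+R\cdot C\sqrt{\Theta}$, i.e.\ the asserted bound $c_-(t_0,x_0)-R\cdot C\sqrt{\Theta}\le\mathrm{Val}_-(t_0,x_0)$.

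The step I expect to be the main obstacle is the verification of (L8) for the auxiliary game: it is tempting but incorrect to treat the swapped minimax equality as identical to (L8), and the argument genuinely relies on the quantification over all directions $\xi$ (equivalently over both $\xi$ and $-\xi$) to recover it. Everything else — the preservation of (L1)--(L7) and of the constants $\varkappa,\Theta,\beta,C$, the dictionary between $v$-stability and $u$-stability, and the sign bookkeeping $\mathrm{Val}^{\mathrm{aux}}_+=-\mathrm{Val}_-$ — is routine once the relabelling is fixed.
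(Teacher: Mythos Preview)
Your proposal is correct and follows exactly the approach indicated in the paper, which states only that ``it suffices to consider the game with payoff function given by $-g$ and interchange the players.'' You have carefully unpacked this one-line hint, including the nontrivial verification that the Isaacs condition (L8) survives the swap via the substitution $\xi\mapsto-\xi$, which the paper leaves implicit.
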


The proof of Theorem \ref{th_estimate_first} is given in the next section.  Corollary \ref{crl_gamma_1} directly follows from Theorem \ref{th_estimate_first}. To prove the Corollary \ref{crl_gamma_2} it suffices to consider the game with payoff function given by $-g$ and interchange the players.

\section{Properties of the model of the game}\label{sect_properties}

Let a 5-tuple $(\Omega^X,\mathcal{F}^X,\{\mathcal{F}^X_s\}_{s\in [t_0,T]},X(\cdot),P)$ be a realization of  the motion for the strategy of the first player $\ues$ and some strategy of the second player $\mathfrak{v}$, partition $\Delta=\{t_l\}_{l=1}^r$ and the initial position $(t_0,x_0)$. Recall (see Section \ref{sect_strategies}) that the construction of the strategy $\ues$ relies on model at time $t_r=T$ (see Definition \ref{def_model}). Further, the elements of $\Omega=\Omega^X\times\Omega^U\times \Omega^V$ are the triples $(\omega^X,\omega^U,\omega^V)$. Recall that
$$u(t,\omega^X,\omega^U,\omega^V)=u_{X(\cdot,\omega^X,\omega^U,\omega^V)}(t,\omega^U),\ \ v(t,\omega^X,\omega^U,\omega^V)=v_{X(\cdot,\omega^X,\omega^U,\omega^V)}(t,\omega^V).$$
Let $(\Gamma^{r},\mathcal{G}^{r},\{\mathcal{G}^{r}_{s}\}_{s\in [t_0,t_l]},P^{r}_{x(\cdot)},u_{x(\cdot)}^{r}, \hat{\mu}^{r}_{x(\cdot)},\hat{v}^{l+1}_{x(\cdot)},Y_{x(\cdot)}^{r})$ be the model of the game  used in the definition of the strategy $\hat{\mathfrak{u}}_\Delta$ (see (\ref{u_opt_space_def}), (\ref{u_opt_def})). For $t\in [0,t^r]$, $A\subset U$ put
\begin{equation}\label{u_hat_def}
\hat{\mu}(t,A,\omega^X,\omega^U,\omega^V)\triangleq\hat{\mu}^r_{X(\cdot,\omega^X,\omega^U,\omega^V)}(t,A,\omega^U),
\end{equation}
\begin{equation}\label{v_hat_def}
\hat{v}(t,\omega^X,\omega^U,\omega^V)\triangleq\hat{v}^r_{X(\cdot,\omega^X,\omega^U,\omega^V)}(t,\omega^U),
\end{equation}
\begin{equation}\label{Y_def}
Y(t,\omega^X,\omega^U,\omega^V)\triangleq Y^r_{X(\cdot,\omega^X,\omega^U,\omega^V)}(t,\omega^U).
\end{equation}

\begin{Lm}\label{lm_stability}We have that
\begin{enumerate}
  \item $Y(t_0)=x_0$ $P$-a.s.;
  \item for any $\varphi\in\mathcal{D}^2$
  $$
    \varphi(Y(s))-\int_{t_0}^s\int_{U} L_\tau^2[w,\hat{v}(\tau)]\varphi(Y(\tau))\hat{\mu}(\tau,dw)d\tau
  $$
  is a martingale;
  \item $\mathbb{E}c_+(t_l,Y(t_l))\geq \mathbb{E}c_+(t_{l+1},Y(t_{l+1})). $
  \item for $\tau\in [t_,t_{l+1})$ \begin{equation*}
  \begin{split}
          \max_{v\in V}\varpi(t_l,x(t_l),Y(t_l),u(\tau),v)&=\max_{v\in V}\min_{u\in U}\varpi(t_l,x(t_l),Y(t_l),u,v)\\&=\min_{u\in U}\varpi(t_l,x(t_l),Y(t_l),u,\hat{v}(\tau)).\end{split}
         \end{equation*}
\end{enumerate}
\end{Lm}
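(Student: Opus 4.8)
The plan is to unwind the definitions of the strategy $\ues$ and of the realization of motion, and then verify the four claims essentially by construction. The key observation is that all four statements are "pointwise in $x(\cdot)$" properties of the model $(\Gamma^r,\mathcal{G}^r,\{\mathcal{G}^r_s\},P^r_{x(\cdot)},\hat{\mu}^r_{x(\cdot)},\hat{v}^r_{x(\cdot)},Y^r_{x(\cdot)})$ that get transported to the probability space $\Omega=\Omega^X\times\Omega^U\times\Omega^V$ via the definitions (\ref{u_hat_def})--(\ref{Y_def}) together with conditions~6 and~7 of Definition~\ref{def_motion}, which say that conditionally on $\{X(\cdot)=x(\cdot)\}$ the $\omega^U$-marginal of $P$ is $P^U_{x(\cdot)}=P^r_{x(\cdot)}$.

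First I would prove claim~1: by part~4 of Definition~\ref{def_model}, $Y^r_{x(\cdot)}(t_0)=x(t_0)$ $P^r_{x(\cdot)}$-a.s.\ for every $x(\cdot)$; since $X(t_0)=x_0$ $P$-a.s.\ and $Y(t_0,\omega^X,\omega^U,\omega^V)=Y^r_{X(\cdot,\cdot)}(t_0,\omega^U)=X(\cdot)(t_0)=x_0$ wherever the defining identity holds, integrating the conditional statement over the law of $X(\cdot)$ gives $Y(t_0)=x_0$ $P$-a.s. Next, for claim~3, part~5 of Definition~\ref{def_model} gives $\mathbb{E}^r_{x(\cdot)}c_+(t_l,Y^r_{x(\cdot)}(t_l))\geq \mathbb{E}^r_{x(\cdot)}c_+(t_{l+1},Y^r_{x(\cdot)}(t_{l+1}))$ for each fixed $x(\cdot)$; taking $\mathbb{E}=\int(\,\cdot\,|X(\cdot)=x(\cdot))\,\mathrm{Law}(X(\cdot))(dx(\cdot))$ and using condition~6 of Definition~\ref{def_motion} to identify the inner conditional expectation (of the $\omega^U$-measurable random variable $c_+(t_l,Y(t_l))$) with $\mathbb{E}^U_{x(\cdot)}=\mathbb{E}^r_{x(\cdot)}$, the inequality is preserved under the outer integration. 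Claim~4 is purely algebraic: for $\tau\in[t_l,t_{l+1})$ the strategy rule (\ref{u_opt_def}) sets $u(\tau)=u_l[X(t_l),Y(t_l)]$, which by the defining saddle-point property of $u_l[z,\xi]$ and $v_l[z,\xi]$ attains $\min_u\max_v\varpi=\max_v\min_u\varpi$; and part~3 of Definition~\ref{def_model}, transported via (\ref{v_hat_def}), gives $\min_u\varpi(t_l,X(t_l),Y(t_l),u,\hat v(\tau))=\max_v\min_u\varpi(t_l,X(t_l),Y(t_l),u,v)$. Chaining these identities yields claim~4.

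The main work is claim~2, the martingale property of $\varphi(Y(s))-\int_{t_0}^s\int_U L^2_\tau[w,\hat v(\tau)]\varphi(Y(\tau))\hat\mu(\tau,dw)\,d\tau$ on the product space with filtration $\{\mathcal{F}_s\}=\{\mathcal{F}^X_s\otimes\mathcal{F}^U_s\otimes\mathcal{F}^V_s\}$. By part~4 of Definition~\ref{def_model} the analogous process $N^r_{x(\cdot)}(s)$ is a $\{\mathcal{G}^r_s\}_{s\in[t_0,t_r]}$-martingale under $P^r_{x(\cdot)}$ for each fixed $x(\cdot)$; I need to lift this to a martingale under $P$ with respect to the larger filtration. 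The plan is: (i)~use the non-anticipativity built into part~2 of Definition~\ref{def_model} (and into the stepwise structure of $\ues$) — namely $Y^r_{x(\cdot)}(\tau)$, $\hat\mu^r_{x(\cdot)}(\tau,\cdot)$, $\hat v^r_{x(\cdot)}(\tau)$ depend only on $x(t_0),\dots,x(t_j)$ with $t_j\le\tau$ — to argue that the process $N(s)\triangleq\varphi(Y(s))-\int_{t_0}^s\!\int_U L^2_\tau[w,\hat v(\tau)]\varphi(Y(\tau))\hat\mu(\tau,dw)d\tau$ on $\Omega$ is adapted and that for $s<s'$ the increment $N(s')-N(s)$ is, conditionally on $\mathcal{F}_s$, determined by the conditional increment on each model piece; (ii)~condition on $X(\cdot)$ and note that, since $\ues$ does not depend on $\omega^V$ (the process $Y$, $\hat\mu$, $\hat v$ are functions of $\omega^U$ only) and since conditionally on $X(\cdot)$ the factor $P^U_{x(\cdot)}$ governs $\omega^U$, the conditional law of $s\mapsto N(s)$ given $\mathcal{F}_s\vee\sigma(X(\cdot))$ agrees with that of $N^r_{x(\cdot)}$ under $P^r_{x(\cdot)}$; (iii)~apply the fixed-$x(\cdot)$ martingale property and the tower property. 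The delicate point — and the step I expect to be the main obstacle — is handling the interaction of the conditioning on the \emph{whole path} $X(\cdot)$ with the progressive (time-$s$) filtration $\mathcal{F}_s$: one must check that conditioning on $\mathcal{F}_s$ and then on the future of $X(\cdot)$ does not destroy the martingale increment, which is exactly where the stepwise structure (controls on $[t_l,t_{l+1})$ depend only on $X(t_0),\dots,X(t_l)$, all $\mathcal{F}_{t_l}\subset\mathcal{F}_s$-measurable) is essential; I would make this rigorous by proving the martingale property separately on each interval $[t_l,t_{l+1}]$, where $\hat v(\tau)$ is $\mathcal{F}_{t_l}$-measurable and constant in $\tau$ and $Y$ solves the corresponding $L^2$-martingale problem driven by $\hat\mu$, and then concatenating.
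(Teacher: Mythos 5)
Your proposal is correct and takes essentially the same route as the paper: the paper's entire proof is the one-sentence remark that the lemma ``directly follows'' from (\ref{u_opt_def})--(\ref{Y_def}), the properties of the model for the number $r$ and the construction of $\ues$, i.e., exactly the definition-unwinding you carry out (claims 1, 3, 4 by transporting the model properties through conditions 6--7 of Definition~\ref{def_motion}, claim 4 via the saddle-point choice of $u_l[z,\xi]$, $v_l[z,\xi]$). Your interval-by-interval treatment of the martingale lifting in claim 2 is in fact more detailed than anything the paper offers for this step.
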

The proof of the Lemma directly follows from  (\ref{u_opt_def})--(\ref{Y_def}), the properties of the model of the game for the number $r$ and the construction the strategy $\ues$.

\begin{Lm}\label{lm_quad_1} There exist a function $\alpha_2(\cdot)\in\mathcal{A}$  such that for $t\geq s$ $$\mathbb{E}\|X(t)-X(s)\|^2 \leq M_0^1(t-s)+\alpha_2(t-s)\cdot(t-s).$$
\end{Lm}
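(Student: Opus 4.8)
The plan is to obtain the quadratic-increment estimate from Dynkin's formula applied to the test function $\vartheta_{X(s)}(x)=\|x-X(s)\|^2$, which belongs to $\mathcal{D}^1$ by the standing assumption $\{\vartheta_a\}_{a\in\mathbb{R}^d}\subset\mathcal{D}^i$. Fix $s$ and work on the interval $[s,t]$. Conditioning on $\mathcal{F}_s$, the point $X(s)$ is known, and by the martingale property of (\ref{dynkin}) together with the formula (\ref{action_quadratic}) for the action of $L^1_\tau[u,v]$ on $\vartheta_a$, we get
\begin{equation*}
\mathbb{E}\bigl[\|X(t)-X(s)\|^2\,\big|\,\mathcal{F}_s\bigr]=\mathbb{E}\Bigl[\int_s^t\bigl(\Sigma^1(\tau,X(\tau),u(\tau),v(\tau))+2\langle X(\tau)-X(s),b^1(\tau,X(\tau),u(\tau),v(\tau))\rangle\bigr)d\tau\,\Big|\,\mathcal{F}_s\Bigr].
\end{equation*}
Taking full expectations and using the bounds from condition (L5), namely $|\Sigma^1|\le M_0^1$ and $\|b^1\|\le M_1^1$, gives
\begin{equation*}
\mathbb{E}\|X(t)-X(s)\|^2\le M_0^1(t-s)+2M_1^1\int_s^t\mathbb{E}\|X(\tau)-X(s)\|\,d\tau.
\end{equation*}

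Next I would close this into a self-contained bound on $\phi(r)\triangleq\mathbb{E}\|X(s+r)-X(s)\|^2$ for $r\in[0,T]$. By Cauchy--Schwarz, $\mathbb{E}\|X(\tau)-X(s)\|\le\sqrt{\phi(\tau-s)}$, so the inequality above reads $\phi(r)\le M_0^1 r+2M_1^1\int_0^r\sqrt{\phi(\rho)}\,d\rho$. A first crude consequence is that $\phi$ is bounded on $[0,T]$: iterating once with the trivial estimate $\sqrt{\phi(\rho)}\le\frac12(1+\phi(\rho))$ and applying Gronwall's inequality yields $\phi(r)\le(M_0^1 T+M_1^1 T)e^{M_1^1 T}=:N$ for all $r\in[0,T]$. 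Feeding $\sqrt{\phi(\rho)}\le\sqrt{N}$ back into the integral inequality gives the sharper
\begin{equation*}
\mathbb{E}\|X(t)-X(s)\|^2=\phi(t-s)\le M_0^1(t-s)+2M_1^1\sqrt{N}\,(t-s).
\end{equation*}
Setting $\alpha_2(\delta)\triangleq 2M_1^1\sqrt{N}$ (a constant, hence trivially in $\mathcal{A}$ — or, if a genuinely vanishing modulus is wanted, one more bootstrap using $\sqrt{\phi(\rho)}\le\sqrt{M_0^1\rho+\alpha_2(\rho)\rho}$ produces $\alpha_2(\delta)=2M_1^1\sqrt{M_0^1\delta+o(\delta)}\to 0$ as $\delta\to 0$) completes the argument.

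The one genuine subtlety — the main obstacle — is justifying the use of Dynkin's formula in the form above: strictly speaking the martingale property in Definition~\ref{def_motion} is stated for a fixed test function $\varphi\in\mathcal{D}^1$, whereas here the test function $\vartheta_{X(s)}$ depends on the random initial point $X(s)$. This is handled by a standard conditioning argument: one first applies the martingale property of (\ref{dynkin}) with the two-parameter family $\varphi_a=\vartheta_a$, obtaining that $\vartheta_a(X(t))-\int_{t_0}^t L^1_\tau[u(\tau),v(\tau)]\vartheta_a(X(\tau))d\tau$ is a martingale for each fixed $a$, then evaluates the resulting identity for the conditional expectation given $\mathcal{F}_s$ at $a=X(s)$ (which is $\mathcal{F}_s$-measurable), and uses that conditional expectations commute with the substitution of an $\mathcal{F}_s$-measurable parameter. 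A minor companion point is that the integrand $\langle X(\tau)-X(s),b^1(\tau,X(\tau),u(\tau),v(\tau))\rangle$ must be shown integrable before splitting the expectation, which follows a posteriori from the boundedness of $b^1$ and the finiteness of $\phi$ established in the bootstrap. Everything else is routine.
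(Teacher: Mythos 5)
Your argument is essentially the paper's: the same Dynkin/martingale identity with the quadratic test function via (\ref{action_quadratic}) and conditioning on $\mathcal{F}_s$, followed by a Gronwall-type a priori bound and a bootstrap. The only substantive difference is cosmetic: the paper gets its crude bound by Young's inequality $2\langle z,b^1\rangle\le (M_1^1)^2+\|z\|^2$, which already gives $\mathbb{E}\|X(\tau)-X(s)\|^2\le (M_0^1+(M_1^1)^2)e^{\tau-s}(\tau-s)$, so a single reinsertion via Cauchy--Schwarz yields the $\sqrt{\delta}$-modulus; your route via $\sqrt{\phi}\le\tfrac12(1+\phi)$ only gives a uniform constant $N$ and therefore needs one more bootstrap, but it lands in the same place.

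One point does need correcting: the claim that $\alpha_2(\delta)\triangleq 2M_1^1\sqrt{N}$ is ``a constant, hence trivially in $\mathcal{A}$'' contradicts the paper's definition of $\mathcal{A}$, which consists of functions with $\alpha(\delta)\to 0$ as $\delta\to 0$; a positive constant is not admissible, and this vanishing is genuinely used later (it is what makes $\epsilon(d(\Delta))\to 0$ in Theorem~\ref{th_estimate_first}, so that the error term collapses to $R\,C\sqrt{\Theta}$). Hence your parenthetical ``one more bootstrap'' is not an optional refinement but the actual proof of the lemma: feeding the linear bound $\phi(\rho)\le (M_0^1+2M_1^1\sqrt{N})\rho$ back into $\phi(r)\le M_0^1 r+2M_1^1\int_0^r\sqrt{\phi(\rho)}\,d\rho$ gives an extra term of order $r^{3/2}$, i.e.\ $\alpha_2(\delta)=O(\sqrt{\delta})$, matching the paper's choice $\alpha_2(\delta)=\tfrac23 M_1^1M'\delta^{1/2}$. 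With that step made the main line of the argument (rather than an aside), the proof is complete; the remarks on applying the martingale property with the $\mathcal{F}_s$-measurable parameter $a=X(s)$ and on integrability are fine and are also implicit in the paper.
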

\begin{proof}
Since (\ref{dynkin}) is a martingale, taking into account (\ref{action_quadratic}) we have that
\begin{multline}\label{expect_quad}
  \mathbb{E}\|X(t)-X(s)\|^2=\mathbb{E}\left( \mathbb{E}\left(\|X(t)-X(s)\|^2\Bigl| \mathcal{F}_s\right)\right)\\=
  \mathbb{E}\left(\mathbb{E}\left(\int_{s}^tL^1_\tau[u(\tau),v(\tau)]\|X(\tau)-X(s)\|^2d\tau\Bigl|\mathcal{F}_s\right)\right)\\
  = \mathbb{E}\left(\int_{s}^tL^1_\tau[u(\tau),v(\tau)]\|X(\tau)-X(s)\|^2d\tau\right)\\
  =\mathbb{E}\int_{s}^t\Bigl[\Sigma^1(\tau,X(\tau),u(\tau),v(\tau))+2\langle b(\tau,X(\tau),u(\tau),v(\tau)), X(\tau)-X(s)\rangle.\end{multline}
  Using  condition (L5) we obtain that
  $$\mathbb{E}\|X(t)-X(s)\|^2 \\\leq
  (M_0^1+(M_1^1)^2)(t-s)+\int_{s}^t \mathbb{E}\|X(\tau)-X(s)\|^2 d\tau.
$$
 Gronwall's inequality yields the estimate
$$\mathbb{E}\|X(t)-X(s)\|^2\leq (M_0^1+(M_1^1)^2)e^{(t-s)}(t-s). $$

Put $M'\triangleq (M_0^1+(M_1^1)^2)e^{T}$. Since
$$\mathbb{E}\|X(\tau)-X(s)\|\leq \sqrt{\mathbb{E}\|X(\tau)-X(s)\|^2}, $$ we get from (\ref{expect_quad}) the following estimate
$$\mathbb{E}\|X(t)-X(s)\|^2\leq M_0^1(t-s)+\int_{t}^sM_1^1M'\sqrt{\tau-s}d\tau. $$

Finally, put $$\alpha_2(\delta)\triangleq \frac{2}{3}M_1^1M'\delta^{1/2}.$$
\end{proof}
\begin{Lm}\label{lm_quad_2} There exist a function $\alpha_3(\cdot)\in\mathcal{A}$  such that for $t\geq s$ $$\mathbb{E}\|Y(t)-Y(s)\|^2 \leq M_0^2(t-s)+\alpha_3(t-s)\cdot(t-s).$$
\end{Lm}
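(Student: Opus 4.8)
The proof will follow the scheme of the proof of Lemma~\ref{lm_quad_1}, replacing the process $X$, its generator $L^1_\tau[u(\tau),v(\tau)]$ and the constants $M_0^1,M_1^1$ by the model process $Y$, the averaged generator $\varphi\mapsto\int_U L^2_\tau[w,\hat v(\tau)]\varphi\,\hat\mu(\tau,dw)$ and the constants $M_0^2,M_1^2$.

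First I would record the quadratic identity. By Lemma~\ref{lm_stability}(2) the process $\varphi(Y(s))-\int_{t_0}^s\int_U L^2_\tau[w,\hat v(\tau)]\varphi(Y(\tau))\hat\mu(\tau,dw)d\tau$ is a $\{\mathcal{F}_s\}$-martingale for every $\varphi\in\mathcal{D}^2$, and $\vartheta_a\in\mathcal{D}^2$ for all $a$. Since $Y(\tau)$ depends on $X$ only through $X(t_0),\dots,X(t_j)$ with $t_j\le\tau$ (Definition~\ref{def_model}), $Y$ is $\{\mathcal{F}_s\}$-adapted; hence, exactly as in (\ref{expect_quad}), conditioning on $\mathcal{F}_s$, substituting $\varphi=\vartheta_{Y(s)}$ (legitimate because $Y(s)$ is $\mathcal{F}_s$-measurable), using $\vartheta_{Y(s)}(Y(s))=0$ and (\ref{action_quadratic}), I obtain
\begin{equation*}
\mathbb{E}\|Y(t)-Y(s)\|^2=\mathbb{E}\int_s^t\int_U\bigl[\Sigma^2(\tau,Y(\tau),w,\hat v(\tau))+2\langle Y(\tau)-Y(s),b^2(\tau,Y(\tau),w,\hat v(\tau))\rangle\bigr]\hat\mu(\tau,dw)d\tau.
\end{equation*}

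Next I would bound the right-hand side. The estimates of (L5) are uniform in the control arguments, so $\Sigma^2\le M_0^2$ and $\|b^2\|\le M_1^2$ pointwise, and these bounds are preserved by integration against the probability measure $\hat\mu(\tau,\cdot)$ and by evaluation at $\hat v(\tau)\in V$. By $2\langle a,b\rangle\le\|a\|^2+\|b\|^2$ this first gives $\mathbb{E}\|Y(t)-Y(s)\|^2\le(M_0^2+(M_1^2)^2)(t-s)+\int_s^t\mathbb{E}\|Y(\tau)-Y(s)\|^2d\tau$, whence Gronwall's inequality yields $\mathbb{E}\|Y(t)-Y(s)\|^2\le M''(t-s)$ with $M''\triangleq(M_0^2+(M_1^2)^2)e^{T}$. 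Feeding this back into the identity, estimating instead $2\langle Y(\tau)-Y(s),b^2\rangle\le 2M_1^2\|Y(\tau)-Y(s)\|$ together with $\mathbb{E}\|Y(\tau)-Y(s)\|\le\sqrt{\mathbb{E}\|Y(\tau)-Y(s)\|^2}\le\sqrt{M''}\sqrt{\tau-s}$, I arrive at $\mathbb{E}\|Y(t)-Y(s)\|^2\le M_0^2(t-s)+2M_1^2\sqrt{M''}\int_s^t\sqrt{\tau-s}\,d\tau$; setting $\alpha_3(\delta)\triangleq\tfrac{4}{3}M_1^2\sqrt{M''}\,\delta^{1/2}\in\mathcal{A}$ finishes the proof.

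There is no real obstacle here: the statement is the word-for-word analogue of Lemma~\ref{lm_quad_1}, and the only two points needing a word of justification are (i) that the martingale relation for $Y$ holds for the quadratic functions $\vartheta_a$ and can be specialized to the $\mathcal{F}_s$-measurable base point $a=Y(s)$ by conditioning on $\mathcal{F}_s$, and (ii) that the a priori bounds (L5) pass through the averaging in $\hat\mu$ over $U$ and the evaluation at $\hat v(\tau)$, which is immediate since those bounds are uniform in $(u,v)$.
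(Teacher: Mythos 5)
Your proposal is correct and follows essentially the same route as the paper, which proves this lemma simply by declaring it analogous to Lemma~\ref{lm_quad_1}: you apply the martingale property of Lemma~\ref{lm_stability}(2) to $\vartheta_{Y(s)}$, note that the bounds of (L5)--(L6) are uniform in the controls and hence survive averaging against $\hat\mu(\tau,\cdot)$ and evaluation at $\hat v(\tau)$, and then repeat the Gronwall plus bootstrap argument to get $\alpha_3(\delta)=O(\delta^{1/2})$. The only differences are cosmetic (your constants in $\alpha_3$ are in fact cleaner than the paper's), so nothing further is needed.
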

The proof of this Lemma  is analogous to the proof of the previous Lemma and relies on Lemma \ref{lm_stability} and conditions (L1)--(L7).

\begin{Lm}\label{lm_kras_subb} There exists a function $\epsilon(\cdot)\in\mathcal{A}$ such that
\begin{multline}\label{X_t_lp_Y_t_lp_estima}\mathbb{E}\|X(t_{l+1})-Y(t_{l+1})\|^2\leq \|X(t_{l})-Y(t_l)\|^2(1+\beta(t_{l+1}-t_l))\\+\Theta(t_{l+1}-t_l)+\epsilon(t_{l+1}-t_l)\cdot(t_{l+1}-t_l). \end{multline}
\end{Lm}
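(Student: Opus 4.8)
The plan is to estimate $\mathbb{E}\|X(t_{l+1})-Y(t_{l+1})\|^2$ by conditioning on $\mathcal{F}_{t_l}$ and applying Dynkin's formula to the function $\vartheta_a(x)=\|x-a\|^2$ on the interval $[t_l,t_{l+1}]$ for both processes. Writing $\Delta_l=t_{l+1}-t_l$, I would first fix the event $\{X(t_l)=x(t_l), Y(t_l)=y\}$ and use the identity $\|X(t_{l+1})-Y(t_{l+1})\|^2=\|X(t_{l+1})-y\|^2-2\langle X(t_{l+1})-y, Y(t_{l+1})-y\rangle+\|Y(t_{l+1})-y\|^2$, but more efficiently one expands around the frozen point: $\|X(t_{l+1})-Y(t_{l+1})\|^2 = \|X(t_l)-Y(t_l)\|^2 + [\text{increment terms}]$. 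Concretely, since (\ref{dynkin}) is a martingale, for the $X$-process $\mathbb{E}(\|X(t_{l+1})-a\|^2|\mathcal{F}_{t_l}) = \|X(t_l)-a\|^2 + \mathbb{E}(\int_{t_l}^{t_{l+1}} L^1_\tau[u(\tau),v(\tau)]\vartheta_a(X(\tau))\,d\tau|\mathcal{F}_{t_l})$, and by (\ref{action_quadratic}) the integrand is $\Sigma^1(\tau,X(\tau),u(\tau),v(\tau))+2\langle X(\tau)-a, b^1(\tau,X(\tau),u(\tau),v(\tau))\rangle$; the analogous formula holds for $Y$ with $L^2$ and the generalized control $\hat{\mu}$, via Lemma \ref{lm_stability}(2). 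Taking $a$ to be, respectively, $Y(t_l)$ and $X(t_l)$ frozen — or rather combining the two expansions carefully around the midpoint — produces the main cross term $2\langle X(\tau)-Y(\tau), b^1(\tau,X(\tau),u(\tau),v(\tau)) - b^2(\tau,Y(\tau),\cdot)\rangle$ after the $\Sigma^i$ terms are bounded by $M_0^1+M_0^2$ and absorbed into $\Theta\Delta_l$ (together with $\varkappa$).

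The heart of the argument is the treatment of that cross term. I would split $b^1(\tau,X(\tau),u(\tau),v(\tau))-b^2(\tau,Y(\tau),u(\tau),\hat v(\tau))$ into three pieces: (i) the difference $b^1-b^2$ at the same argument, controlled by $\varkappa$ via Cauchy--Schwarz and $2\langle X-Y, \cdot\rangle \leq \|X-Y\|^2 + \varkappa$; (ii) the spatial Lipschitz part, $\|b^i(\tau,X(\tau),\cdot)-b^i(\tau,Y(\tau),\cdot)\| \leq K^i\|X(\tau)-Y(\tau)\|$, contributing the $2K^i$ in $\beta = 2+2K^i$; (iii) the time-regularity part $\|b^i(\tau,\cdot)-b^i(t_l,\cdot)\|^2 \leq \alpha_1^i(\tau-t_l)$ from (L4), plus the replacement of $X(\tau),Y(\tau)$ by $X(t_l),Y(t_l)$ in the slow variables, which uses Lemmas \ref{lm_quad_1} and \ref{lm_quad_2} to bound $\mathbb{E}\|X(\tau)-X(t_l)\|^2$ and $\mathbb{E}\|Y(\tau)-Y(t_l)\|^2$ by quantities of order $\Delta_l$ with an $\alpha(\cdot)\in\mathcal{A}$ prefactor — all of these go into $\epsilon(\Delta_l)\cdot\Delta_l$. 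After these reductions the cross term has been brought to the form $2\langle X(t_l)-Y(t_l), b^i(t_l,X(t_l),u_l[\cdot],v) - b^i(t_l,Y(t_l),u_l[\cdot],v)\rangle$ (with the appropriate index $i$ chosen by the Isaacs condition (L8)), i.e. exactly $2\varpi(t_l,X(t_l),Y(t_l),u(\tau),v(\tau)) - 2\varpi(t_l,Y(t_l),\ldots)$ up to sign conventions — this is where the extremal shift construction enters.

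The key saddle-point step: by Lemma \ref{lm_stability}(4), the control $u(\tau)=u_l[X(t_l),Y(t_l)]$ realizes $\max_{v}\varpi(t_l,X(t_l),Y(t_l),u(\tau),v) = \min_u\max_v \varpi = \max_v\min_u\varpi$, so for the actual (arbitrary) second-player control $v(\tau)$ we get $\varpi(t_l,X(t_l),Y(t_l),u(\tau),v(\tau)) \leq \max_v\min_u\varpi(t_l,X(t_l),Y(t_l),u,v)$; symmetrically $\hat v(\tau)$ in the model realizes the min over $u$ of that same quantity. The crucial cancellation is that $\langle z-\xi, b(t_l,z,u,v)\rangle$ and $\langle z-\xi, b(t_l,\xi,u,v)\rangle$ are handled so that the relevant directional term, $\langle X(t_l)-Y(t_l), b^i(t_l,X(t_l),u(\tau),v(\tau))\rangle - \langle X(t_l)-Y(t_l), b^i(t_l,Y(t_l),u(\tau),v(\tau))\rangle$ — which in Case (1) is $\varpi(t_l,X(t_l),Y(t_l),u,v)$ minus a term Lipschitz-bounded by $K^1\|X(t_l)-Y(t_l)\|^2$, and symmetrically in Case (2) — is nonpositive after using the saddle value. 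This yields, after integrating over $[t_l,t_{l+1}]$ and taking expectations,
\begin{equation*}
\mathbb{E}\|X(t_{l+1})-Y(t_{l+1})\|^2 \leq \mathbb{E}\|X(t_l)-Y(t_l)\|^2(1+\beta\Delta_l) + \Theta\Delta_l + \epsilon(\Delta_l)\cdot\Delta_l,
\end{equation*}
where $\epsilon(\cdot)\in\mathcal{A}$ collects the contributions of $\alpha_1^i$, $\alpha_2$, $\alpha_3$, and the Cauchy--Schwarz slack from estimating $\mathbb{E}\|X(\tau)-X(t_l)\|\cdot\|X(t_l)-Y(t_l)\|$ type products. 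The main obstacle is bookkeeping: carefully tracking which terms land in $\beta$ (the $2+2K^i$, from the $2\|X-Y\|^2$ self-coupling in Cauchy--Schwarz plus the Lipschitz defect), which in $\Theta$ (the $\varkappa$ from $b^1$ vs $b^2$ and the $M_0^i$ from the $\Sigma^i$), and which in the vanishing remainder $\epsilon(\Delta_l)$, while ensuring the saddle-point inequality from Lemma \ref{lm_stability}(4) is applied at the frozen time $t_l$ with all the freezing errors pushed into $\epsilon$. One must also be careful that the Isaacs condition is only assumed for one of the two generators, so the argument bifurcates according to whether (L8)--(1) or (L8)--(2) holds, with $\varpi$ defined correspondingly; in each branch the "other" generator's spatial variation is absorbed via its Lipschitz constant into $\epsilon$ rather than $\beta$.
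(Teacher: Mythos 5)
Your plan follows essentially the same route as the paper's proof: expand $\|X(t_{l+1})-Y(t_{l+1})\|^2$, use the martingale property of (\ref{dynkin}) together with Lemmas \ref{lm_quad_1} and \ref{lm_quad_2} and conditions (L4)--(L6) to freeze the drifts at $(t_l,X(t_l))$ and $(t_l,Y(t_l))$, and annihilate the remaining cross term by the extremal-shift saddle relation of Lemma \ref{lm_stability} (statement 4), with the same case split on (L8) and the same allocation of constants to $\beta$, $\Theta$ and $\epsilon$. The only cosmetic difference is that you phrase the expansion via Dynkin's formula for $\vartheta_a$, whereas the paper expands the square algebraically and applies the generator only to linear test functions for the cross terms; this changes nothing essential.
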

\begin{proof}
We have that
\begin{multline*}\|X(t_{l+1})-Y(t_{l+1})\|^2=\|(X(t_{l+1})-X(t_l))-(Y(t_{l+1})-Y(t_l))+(X(t_l)-Y(t_l))\|^2\\=
\|X(t_{l+1})-X(t_l)\|^2+\|X(t_l)-Y(t_l)\|^2+\|Y(t_{l+1})-Y(t_l\|^2\\-2\langle X(t_{l+1})-X(t_l),Y(t_{l+1})-Y(t_l)\rangle+2\langle  X(t_{l+1})-X(t_l),X(t_l)-Y(t_l)\rangle\\-2\langle  Y(t_{l+1})-Y(t_l),X(t_l)-Y(t_l)\rangle\\ \leq \|X(t_l)-Y(t_l)\|^2+
2\|X(t_{l+1})-X(t_l)\|^2+2\|Y(t_{l+1})-Y(t_l)\|^2\\+2\langle  X(t_{l+1})-X(t_l),X(t_l)-Y(t_l)\rangle-2\langle  Y(t_{l+1})-Y(t_l),X(t_l)-Y(t_l)\rangle.
\end{multline*}
Thus, by Lemmas \ref{lm_quad_1} and \ref{lm_quad_2}
\begin{multline}\label{expect_X_Y_1}
\mathbb{E}\|X(t_{l+1})-Y(t_{l+1})\|^2\\\leq 2\mathbb{E}\|X(t_{l+1})-X(t_l)\|^2+2\mathbb{E}\|Y(t_{l+1})-Y(t_l)\|^2+ \mathbb{E}\|X(t_l)-Y(t_l)\|^2\\+2\mathbb{E}\langle  X(t_{l+1})-X(t_l),X(t_l)-Y(t_l)\rangle-2\mathbb{E}\langle  Y(t_{l+1})-Y(t_l),X(t_l)-Y(t_l)\rangle\\\leq \mathbb{E}\|X(t_l)-Y(t_l)\|^2
+2 (M_0^1+M_0^2+\alpha_2(t_{l+1}-t_l)+\alpha_3(t_{l+1}-t_l))(t_{l+1}-t_l)\\+2\mathbb{E}\langle  X(t_{l+1})-X(t_l),X(t_l)-Y(t_l)\rangle-2\mathbb{E}\langle  Y(t_{l+1})-Y(t_l),X(t_l)-Y(t_l)\rangle.
\end{multline}
Further, let us estimate $\mathbb{E}\langle  X(t_{l+1})-X(t_l),X(t_l)-Y(t_l)\rangle$ and $\mathbb{E}\langle  Y(t_{l+1})-Y(t_l),X(t_l)-Y(t_l)\rangle$.
Since (\ref{dynkin}) is a martingale, using formula $L^1_t[u,v]\langle a,x\rangle=\langle a, b^1(t,x,u,v)\rangle$ we obtain that
\begin{multline}\label{expect_X_estima}
  \mathbb{E}\langle  X(t_{l+1})-X(t_l),X(t_l)-Y(t_l)\rangle=\mathbb{E}(\mathbb{E}(\langle  X(t_{l+1})-X(t_l),X(t_l)-Y(t_l)\rangle|\mathcal{F}_{t_l})) \\
  =\mathbb{E}\left(\mathbb{E}\left(\int_{t_l}^{t_{l+1}}L^1_\tau[u(\tau),v(\tau)]\left\langle X(t_l)-Y(t_l),X(\tau)-X(t_l) \right\rangle d\tau\Bigl|\mathcal{F}_{t_l}\right)\right)\\=
  \mathbb{E}\int_{t_l}^{t_{l+1}}L^1_\tau[u(\tau),v(\tau)]\left\langle X(t_l)-Y(t_l),X(\tau)-X(t_l) \right\rangle d\tau\\=
  \mathbb{E}\int_{t_l}^{t_{l+1}}\langle X(t_l)-Y(t_l),b^1(\tau,X(\tau),u(\tau),v(\tau))\rangle.
\end{multline}
It follows from  conditions (L4) and (L6) that for $\tau\in [t_l,t_{l+1}]$
\begin{multline*}
  \left\langle X(t_l)-Y(t_l),b^1(\tau,X(\tau),u(\tau),v(\tau))\right\rangle \\ \leq
  \left\langle X(t_l)-Y(t_l),b^1(t_l,X(t_l),u(\tau),v(\tau))\right\rangle\\+
  \frac{1}{2}\|X(t_l)-Y(t_l)\|^2+K^1\|X(\tau)-X(t_l)\|^2+\alpha_1^1(\tau-t_l).
\end{multline*}
Thus, (\ref{expect_X_estima}) and Lemma \ref{lm_quad_1} yield the inequality
\begin{multline}\label{expect_X_estima_final}
  \mathbb{E}\langle  X(t_{l+1})-X(t_l),X(t_l)-Y(t_l)\rangle \\
  \leq \mathbb{E}\int_{t_l}^{t_{l+1}}\langle X(t_l)-Y(t_l),b^1(t_l,X(t_l),u(\tau),v(\tau))\rangle d\tau\\+\frac{1}{2}\mathbb{E}\|X(t_l)-Y(t_l)\|^2(t_{l+1}-t_l)+
 \alpha_4(t_{l+1}-t_l) (t_{l+1}-t_l).
\end{multline} Here we denote
$\alpha_4(\delta)\triangleq K^1[M_0^1\delta+\alpha_2(\delta)\cdot\delta]
+\alpha_1^1(\delta). $
Note that $\alpha_4(\cdot)\in\mathcal{A}$.

Analogously, using Lemma \ref{lm_quad_2} we obtain that
\begin{multline}\label{expect_Y_estima_final}
  -\mathbb{E}\langle  Y(t_{l+1})-Y(t_l),X(t_l)-Y(t_l)\rangle
  \\\leq -\mathbb{E}\int_{t_l}^{t_{l+1}}\int_U\langle X(t_l)-Y(t_l),b^2(t_l,Y(t_l),w,\hat{v}(\tau))\rangle\hat{\mu}(\tau,dw)d\tau\\ +\frac{1}{2}\mathbb{E}\|X(t_l)-Y(t_l)\|^2(t_{l+1}-t_l)+
 \alpha_5(t_{l+1}-t_l)] (t_{l+1}-t_l).
\end{multline} Here $\alpha_5(\cdot)$ is a function from the set $\mathcal{A}$ given by the rule
$$\alpha_5(\delta)=K^2[M_0^2\delta+\alpha_3(\delta)\cdot\delta]
+\alpha_1^2(\delta). $$
Combining (\ref{expect_X_Y_1}), (\ref{expect_X_estima_final}), (\ref{expect_Y_estima_final}) and Lemmas \ref{lm_quad_1}, \ref{lm_quad_2} we obtain that
\begin{multline}\label{expect_X_Y_2}
\mathbb{E}\|X(t_{l+1})-Y(t_{l+1})\|^2\leq
\mathbb{E}\|X(t_{l})-Y(t_{l})\|^2(1+(t_{l+1}-t_l))\\+(M_0^1+M_0^2+\epsilon(t_{l+1}-t_l))(t_{l+1}-t_l) \\+ \mathbb{E}\int_{t_l}^{t_{l+1}}\int_U[\langle X(t_l)-Y(t_l),b^1(t_l,X(t_l),u(\tau),v(\tau))\rangle\\
-\langle X(t_l)-Y(t_l),b^2(t_l,Y(t_l),w,\hat{v}(\tau))\rangle]\hat{\mu}(\tau,dw)d\tau.
\end{multline}
Here $$\epsilon(\delta)\triangleq \alpha_2(\delta)+\alpha_3(\delta)+\alpha_4(\delta)+\alpha_5(\delta). $$
Now assume that condition (L8)--(1) is fulfilled. Taking into account condition (L6) and definition of $\varkappa$ (see (\ref{kappa_def})) we obtain that for all $w\in U$
\begin{multline}\label{expect_diff_1}
  \langle X(t_l)-Y(t_l),b^1(t_l,X(t_l),u(\tau),v(\tau))-b^2(t_l,Y(t_l),w,\hat{v}(\tau))\rangle\\
  \leq \langle X(t_l)-Y(t_l),b^1(t_l,X(t_l),u(\tau),v(\tau))-b^1(t_l,X(t_l),w,\hat{v}(\tau))\rangle]\\+
(1/2+K^1)\|X(t_l)-Y(t_l)\|^2+\varkappa/2\\
=\varpi(t_l,X(t_l),Y(t_l),u(\tau),v(\tau))-\varpi(t_l,X(t_l),Y(t_l),w,\hat{v}(\tau)\\ +(1/2+K^1)\|X(t_l)-Y(t_l)\|^2 +\varkappa/2.
\end{multline}
If condition (L8)--(2) holds true the inequality (\ref{expect_diff_1}) takes the form
\begin{multline}\label{expect_diff_2}
  \langle X(t_l)-Y(t_l),b^1(t_l,X(t_l),u(\tau),v(\tau))-b^2(t_l,Y(t_l),w,\hat{v}(\tau))\rangle \\
  \leq \varpi(t_l,X(t_l),Y(t_l),u(\tau),v(\tau))-\varpi(t_l,X(t_l),Y(t_l),w,\hat{v}(\tau))\\ +(1/2+K^2)\|X(t_l)-Y(t_l)\|^2 +\varkappa/2.
\end{multline}

The statement 4 of Lemma \ref{lm_stability} yields that for any $\tau\in [t_l,t_{l+1})$, and any $w\in U$
$$
\varpi(t_l,X(t_l),Y(t_l),u(\tau),v(\tau))-\varpi(t_l,X(t_l),Y(t_l),w,\hat{v}(\tau))\leq 0.
$$
This, (\ref{theta_def}), (\ref{expect_X_Y_2}), the definition of $\beta$ (see (\ref{beta_def_i})) and inequalities (\ref{expect_diff_1}), (\ref{expect_diff_2}) imply  inequality~(\ref{X_t_lp_Y_t_lp_estima}).
\end{proof}

\begin{proof}[Proof of Theorem \ref{th_estimate_first}]
By Lemma \ref{lm_kras_subb} we have
$$\mathbb{E}\|X(t_{l+1})-Y(t_{l+1})\|^2\leq e^{\beta(t_{l+1}-t_l)}\mathbb{E}\|X(t_{l})-Y(t_{l})\|^2+[\Theta+\epsilon(d(\Delta))](t_{l+1}-t_l). $$
Therefore,
$$\mathbb{E}\|X(t_{r})-Y(t_{r})\|^2\leq e^{\beta T} \mathbb{E}\|X(t_0)-Y(t_{0})\|^2+e^{\beta T}[\Theta+\epsilon(d(\Delta))]T. $$
Taking into account statement 1 of Lemma \ref{lm_stability} we obtain that
$$\mathbb{E}\|X(t_{r})-Y(t_{r})\|^2\leq C^2[\Theta+\epsilon(d(\Delta))] $$ where the constant $C$ is defined by (\ref{C_def}).

 Jensen's inequality yields the estimate
 \begin{equation}\label{X_Y_abs_estima}
   \mathbb{E}\|X(t_{r})-Y(t_{r})\|\leq C\sqrt{[\Theta+\epsilon(d(\Delta))]}.
 \end{equation}
 We have that
 $$g(X(t_r))=g(Y(t_r))+(g(X(t_r))-g(Y(t_r)))\leq g(Y(t_r))+R\|X(t_r)-Y(t_r)\|. $$
 Further, taking into account (\ref{X_Y_abs_estima}) we get the inequality
 \begin{multline*}J(t_0,x_0,\ues,\mathfrak{v})=\mathbb{E}g(X(t_r)\leq \mathbb{E}g(Y(t_r))+R\mathbb{E}\|X(t_r)-Y(t_r)\|\\\leq \mathbb{E}g(Y(t_r))+RC\sqrt{[\Theta+\epsilon(d(\Delta))]}. \end{multline*}
Statement 3 of Lemma \ref{lm_stability} yields the inequality
$$\mathbb{E}g(Y(t_r))=\mathbb{E}c_+(t_r,Y(t_r))\leq\mathbb{E}c_+(t_0,Y(t_0)). $$
Since $Y(t_0)=X(t_0)$ $P$-a.s., we obtain that
$$J(t_0,x_0,\ues,\mathfrak{v})\leq c_+(t_0,Y(t_0))+RC\sqrt{[\Theta+\epsilon(d(\Delta))]}. $$ Since $\epsilon(\delta)\rightarrow 0$ as $\delta\rightarrow 0$, we get the conclusion of the Theorem.
\end{proof}
\section{Value function of differential game}\label{sect_diffgames}

In this section we consider differential game with the dynamics given by
\begin{equation}\label{system}
  \frac{d}{dt}x(t)=f^1(t,x(t),u(t),v(t)), \ \ t\in [0,T],\ \  x\in\mathbb{R}^d, \ \ u(t)\in U, \ \ v(t)\in V.
\end{equation} This equation corresponds to the generator
  \begin{equation}\label{L1_diff}
  L^1_t[u,v]\varphi(x)=\langle f^1(t,x,u,v),\nabla \varphi(x)\rangle.
\end{equation}
As above the variable $u$ (respectively, $v$) stands for the control of the first (respectively, second) player. The aim of first (respectively, second) player is to minimize (respectively, maximize) the payoff function $g(x(T))$.

Let $$\mathcal{U}[t_0]\triangleq\{u:[t_0,T]\rightarrow U\ \ \mbox{measurable}\},\ \ \mathcal{V}[t_0]\triangleq\{v:[t_0,T]\rightarrow V\ \ \mbox{measurable}\}. $$ The set $\mathcal{U}[t_0]$ (respectively, $\mathcal{V}[t_0]$) is a set of open-loop strategies of the first (respectively, second) player.

We assume that the function $f^1$ is continuous, bounded by $M^1_1$, Lipschitz continuous with respect to $x$ with the constant $K^1$. Additionally, we suppose that the Isaacs condition is fulfilled, i.e. for any $t\in [0,T]$, $x,\xi\in\mathbb{R}^d$
\begin{equation}\label{isaacs_diff}
\min_{u\in U}\max_{v\in V}\langle \xi,f^1(t,x,u,v)\rangle=\max_{v\in V}\min_{u\in U}\langle \xi,f^1(t,x,u,v)\rangle.
\end{equation}


We use the feedback formalization of differential games proposed by Krasovskii and Subbotin. Let $p:\ir\rightarrow U$ be a function, $(t_0,x_0)$ be an initial position, and let $\Delta=\{t_l\}_{l=1}^r$ be a partition of the interval $[t_0,T]$. We say that the strategy $\mathfrak{u}=(\Omega^U,\mathcal{F}^U,\{\mathcal{F}^U_s\}_{s\in [t_0,T]},u_{x(\cdot)},P_{x(\cdot)}^U)$ belongs to the set $\mathbb{U}_{t_0,x_0,\Delta}[p]$ if for any $x(\cdot)\in\mathbb{D}_{t_0}$ satisfying $x(t_0)=x_0$ and $\omega^U\in \Omega^U$, $\tau\in [t_{l},t_{l+1})$ $$u_{x(\cdot)}(\tau,\omega^U)=p(t_l,x(t_l)).$$ Note that the elements of the set $\mathbb{U}_{t_0,x_0,\Delta}$ are stepwise deterministic strategies. Additionally, if $(t_0,x_0)\in \ir$, $\mathfrak{u}\in\mathbb{U}_{t_0,x_0,\Delta}[p]$, $v\in\mathcal{V}$ then  the outcome
$$J(t_0,x_0,\mathfrak{u},v)=J^*(t_0,x_0,\mathfrak{u},v)=J_*(t_0,x_0,\mathfrak{u},v) $$ is well-defined.

Analogously, we say that the strategy $\mathfrak{v}=(\Omega^V,\mathcal{F}^V,\{\mathcal{F}^V_s\}_{s\in [t_0,T]},v_{x(\cdot)},V_{x(\cdot)}^U)$ is an element of the set $\mathbb{V}_{t_0,x_0,\Delta}[q]$ if for any $x(\cdot)\in\mathbb{D}_{t_0}$, such that $x(t_0)=x_0$ and $\omega^V\in \Omega^V$, $\tau\in [t_{l},t_{l+1})$ $$v_{x(\cdot)}(\tau,\omega^V)=q(t_l,x(t_l)).$$ As above, for any $(t_0,x_0)\in \ir$, $\mathfrak{v}\in\mathbb{V}_{t_0,\Delta}[q]$, $u\in\mathcal{U}$   the outcome
$$J(t_0,x_0,u,\mathfrak{v})=J^*(t_0,x_0,u,\mathfrak{v})=J_*(t_0,x_0,u,\mathfrak{v}) $$ is well-defined.

Krasovskii and Subbotin proved that there exist  functions $p^*:\ir \rightarrow U$, $q^*:\ir\rightarrow V$ such that
\begin{multline*}
  \lim_{\delta\downarrow 0}\sup\{J(t_0,x_0,\mathfrak{u},v):\mathfrak{u}\in \mathbb{U}_{t_0,x_0,\Delta}[p^*],d(\Delta)\leq\delta,v\in\mathcal{V}\} \\
  =\lim_{\delta\downarrow 0}\sup\{J(t_0,x_0,\mathfrak{u},v):\mathfrak{u}\in \mathbb{U}_{t_0,x_0,\Delta}[p],d(\Delta)\leq\delta,v\in\mathcal{V},p\in U^{\ir}\}\\=  \lim_{\delta\downarrow 0}\sup\{J(t_0,x_0,u,\mathfrak{v}):\mathfrak{v}\in \mathbb{V}_{t_0,x_0,\Delta}[q^*],d(\Delta)\leq\delta,u\in\mathcal{U}\} \\
  =\lim_{\delta\downarrow 0}\sup\{J(t_0,x_0,u,\mathfrak{v}):\mathfrak{v}\in \mathbb{V}_{t_0,x_0,\Delta}[q],d(\Delta)\leq\delta,u\in\mathcal{U},q\in V^{\ir}\}\\={\rm Val}(t_0,x_0).
\end{multline*} Here $B^A$ stands for the set of functions from $A$ to $B$.

Note that the value function ${\rm Val}$ can be defined using nonanticipating strategies~\cite{bardi}. This formalization is equivalent to Krasovskii--Subbotin approach \cite{Subb_book}.

The function $c_+:\ir\rightarrow \mathbb{R}^d$ is Krasovskii--Subbotin $u$-stable (see \cite{NN_PDG_en}) if $c_+(T,x)=g(x)$ and for any $t,\theta\in [0,T]$, $t<\theta$, $\xi\in\mathbb{R}^d$, $v\in V$ there exists a weakly measurable function $\tau\mapsto \mu_{\xi,v}^{t,\theta}(\tau)\in {\rm rpm}(U)$ such that for $y_{\xi,v}^{t,\theta}(\cdot)$ satisfying
\begin{equation}\label{kras_subb_stab_motion}
\frac{d}{d\tau}y_{\xi,v}^{t,\theta}(\tau)= \int_{U}f^1(\tau,y_{\xi,v}^{t,\theta}(\tau),w,v)\mu_{\xi,v}^{t,\theta}(\tau,dw), \ \ y_{\xi,v}^{t,\theta}(t)=\xi
\end{equation}
 the following inequality holds true \begin{equation}\label{kras_subb_ineq_u}
c_+(t,\xi)\geq c_+(\theta,y_{\xi,v}^{t,\theta}(\theta)).
\end{equation}

Recall \cite{NN_PDG_en} that if $c_+$ is Krasovskii--Subbotin $u$-stable then there exists a function $p:\ir\rightarrow U$ such that
$$\lim_{\delta\downarrow 0}\sup\{J(t_0,x_0,\mathfrak{u},v):\mathfrak{u}\in \mathbb{U}_{t_0,x_0,\Delta}[p],d(\Delta)\leq\delta,v\in\mathcal{V}\}\leq c_+(t_0,x_0).  $$
In addition, \cite[Theorem 6.4]{Subb_book} states  that $c_+$ is Krasovskii--Subbotin $u$-stable function if and only if $c_+$ is a minimax (viscosity) supersolution of the Hamilton-Jacobi PDE
$$\frac{\partial c}{\partial t}+\min_{u\in U}\max_{v\in V}\langle \nabla c,f^1(t,x,u,v)\rangle=0, \ \ c(T,x)=g(x). $$

The link between Krasovskii--Subbotin $u$-stability and the notion of  $u$-stability with respect to the generator introduced in Definition \ref{def_u_stable} is given in the following.
\begin{Pl}\label{pl_u_stable_1}If
$c_+$ is Krasovskii--Subbotin $u$-stable then $c_+$ is $u$-stable with respect to the generator $L^2=\langle f^1(t,x,u,v),\nabla\varphi(x)\rangle$.
\end{Pl}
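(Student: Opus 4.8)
The plan is to verify directly the four conditions of Definition~\ref{def_u_stable} for the generator $L^2_t[u,v]\varphi(x)=\langle f^1(t,x,u,v),\nabla\varphi(x)\rangle$, using the data provided by Krasovskii--Subbotin $u$-stability of $c_+$. Condition~1 ($c_+(T,x)=g(x)$) is immediate since it is part of the definition of Krasovskii--Subbotin $u$-stability. For the remaining conditions the key observation is that the model dynamics generated by $L^2$ is \emph{deterministic}: since $L^2_t[u,v]$ has no second-order or jump part, the ``Dynkin martingale'' in~(\ref{L_2_dynkin}) reduces to requiring that $s\mapsto Y^{t,\theta}_{\xi,v}(s)$ be an absolutely continuous path with $\tfrac{d}{ds}Y^{t,\theta}_{\xi,v}(s)=\int_U f^1(s,Y^{t,\theta}_{\xi,v}(s),w,v)\,\mu^{t,\theta}_{\xi,v}(s,dw)$. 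So the natural choice is to take the probability space $\widetilde{\Omega}^{t,\theta}$ to be trivial (a one-point space), to set $\mu^{t,\theta}_{\xi,v}$ equal to the relaxed control $\mu^{t,\theta}_{\xi,v}(\cdot)$ supplied by Krasovskii--Subbotin $u$-stability, and to let $Y^{t,\theta}_{\xi,v}(\cdot)$ be the corresponding solution $y^{t,\theta}_{\xi,v}(\cdot)$ of~(\ref{kras_subb_stab_motion}).

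With this choice, I would check that $Y^{t,\theta}_{\xi,v}$ so defined is $\{\widetilde{\mathcal F}^{t,\theta}_s\}$-adapted (trivially, on the one-point space) and $\mu^{t,\theta}_{\xi,v}$ is progressively measurable; this needs the weak measurability of $\tau\mapsto\mu^{t,\theta}_{\xi,v}(\tau)$ from the definition of Krasovskii--Subbotin $u$-stability together with the remark (available after~(\ref{kappa_def})--type constructions, cf. the measurable selection statement for $u_l[z,\xi]$, $v_l[z,\xi]$) that relaxed controls may be chosen jointly measurably in the parameters. One verifies that~(\ref{L_2_dynkin}) holds: for $\varphi\in\mathcal D^2$, by the chain rule along the absolutely continuous path $Y^{t,\theta}_{\xi,v}$,
\[
\varphi(Y^{t,\theta}_{\xi,v}(s))-\varphi(\xi)=\int_t^s\Bigl\langle\nabla\varphi(Y^{t,\theta}_{\xi,v}(\tau)),\int_U f^1(\tau,Y^{t,\theta}_{\xi,v}(\tau),w,v)\,\mu^{t,\theta}_{\xi,v}(\tau,dw)\Bigr\rangle d\tau,
\]
which is exactly the integral term in~(\ref{L_2_dynkin}); the difference being constant (hence a martingale on the trivial filtration). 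Then~(\ref{L_2_u_stable}), $c_+(t,\xi)\geq\widetilde{\mathbb E}^{t,\theta}_{\xi,v}c_+(\theta,Y^{t,\theta}_{\xi,v}(\theta))=c_+(\theta,y^{t,\theta}_{\xi,v}(\theta))$, is precisely inequality~(\ref{kras_subb_ineq_u}).

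Conditions~3 and~4 are measurability statements in the parameters $(\xi,v)$ (and $s$). On the trivial space every random variable $\phi$ is a constant, so $\widetilde E^{t,\theta}_{\xi,v}\phi=\phi$ and $\widetilde E^{t,\theta}_{\xi,v}\varphi(Y^{t,\theta}_{\xi,v}(s))=\varphi(y^{t,\theta}_{\xi,v}(s))$; the required measurability then follows from continuous dependence of the ODE solution $y^{t,\theta}_{\xi,v}(s)$ on the initial value $\xi$, the parameter $v$, and time $s$ — which in turn rests on the Lipschitz continuity and boundedness of $f^1$ assumed in this section, plus the joint measurability of the selected relaxed control $\mu^{t,\theta}_{\xi,v}$. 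I expect the only genuinely delicate point to be this last measurable-selection issue: one must ensure that the relaxed controls furnished by the definition of Krasovskii--Subbotin $u$-stability can be arranged to depend measurably on $(\xi,v)$ so that $(\xi,v,s)\mapsto y^{t,\theta}_{\xi,v}(s)$ is measurable; this is a standard application of measurable selection theorems (the set of $(\xi,v,\mu(\cdot))$ satisfying the stability inequality is measurable with nonempty sections), but it is where the bookkeeping lives. Everything else is routine once the model is recognized as the deterministic relaxed flow of~(\ref{system}).
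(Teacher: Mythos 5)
Your proposal is correct and follows essentially the same route as the paper: take the deterministic relaxed flow $y^{t,\theta}_{\xi,v}$ of~(\ref{kras_subb_stab_motion}) as the model process (the paper puts it on $D([t,\theta],U)$ with an arbitrary probability rather than a one-point space, an immaterial difference), observe that for a first-order generator the Dynkin expression~(\ref{L_2_dynkin}) is constant by the chain rule and hence trivially a martingale, and read off~(\ref{L_2_u_stable}) from~(\ref{kras_subb_ineq_u}). Your extra attention to conditions 3--4 of Definition~\ref{def_u_stable} (measurable dependence on $(\xi,v,s)$ via measurable selection of the relaxed controls) is a point the paper's proof passes over in silence, so flagging it is fine but does not change the argument.
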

\begin{proof} Let $t,\theta\in [0,T]$. Put $\widetilde{\Omega}^{t,\theta}\triangleq D([t,\theta],U)$. Let $\widetilde{\mathcal{F}}_s^{t,\theta}\triangleq\mathcal{B}(D([t,s],U))$ be a filtration on $\widetilde{\Omega}^{t,\theta}$, and let $\widetilde{\mathcal{F}}^{t,\theta}\triangleq\widetilde{\mathcal{F}}_\theta^{t,\theta}$. Put $Y^{t,\theta}_{\xi,v}\triangleq y_{\xi,v}^{t,\theta}$. Note that $Y^{t,\theta}_{\xi,v}$ is a deterministic process.
 Finally, let $\widetilde{P}_{\xi,v}^{t,\theta}$ be an arbitrary probability on $\widetilde{\Omega}^{t,\theta}$.
Formula (\ref{kras_subb_stab_motion}) yields  that the process (\ref{L_2_dynkin}) for $L^2_\tau[u,v]\varphi(x)=\langle f^1(t,x,u,v),\nabla\varphi(x)\rangle $ is a martingale, and the equlity $Y_{\xi,v}^{t,\theta}(t)=\xi$. Finally, inequality~(\ref{kras_subb_ineq_u}) implies~(\ref{L_2_u_stable}).
\end{proof}
The notion of Krasovskii--Subbotin $v$-stability is defined in the same way as  $u$-stability. For $v$-stable functions an analog of Proposition \ref{pl_u_stable_1} is also fulfilled.

Now we consider the case when the model system is given by a stochastic differential equation.
\begin{Pl}\label{pl_psi_stable}
If $\psi_\sigma$ is a solution of  $$
  \frac{\partial \psi}{\partial t}+\min_{u\in U}\max_{v\in V}\langle \nabla \psi, f^1(t,x,u,v)\rangle+\frac{\sigma^2}{2}\triangle \psi=0, \ \ \psi(T,x)=g(x).
$$
then $\psi_\sigma$ is $u$- and $v$-stable with respect to $$L^2_\tau[u,v]\varphi(x)=\langle \nabla\varphi(x), f^1(t,x,u,v)\rangle +\frac{\sigma^2}{2}\cdot\triangle\varphi(x).$$
\end{Pl}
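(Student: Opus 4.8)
The plan is to read $L^2$ as the infinitesimal generator of the It\^o diffusion $dY(\tau)=f^1(\tau,Y(\tau),u,v)\,d\tau+\sigma\,dW(\tau)$ and to take the model trajectories of Definitions~\ref{def_u_stable} and~\ref{def_v_stable} to be weak solutions of this equation steered by a measurable feedback of the extremizing player. Fix $t<\theta$; for the common filtered space I would take $\widetilde{\Omega}^{t,\theta}=C([t,\theta],\mathbb{R}^d)$ with the coordinate process $Y^{t,\theta}_{\xi,v}$ and its natural filtration, and let $\widetilde{P}^{t,\theta}_{\xi,v}$ be the law of the solution started from $\xi$ (the definition allows this law to depend on $\xi$ and $v$). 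The enabling analytic input, invoked rather than proved here, is that $\sigma>0$ makes the Hamilton--Jacobi equation uniformly parabolic, so with $f^1$ continuous, bounded and $K^1$-Lipschitz in $x$ and $g$ Lipschitz it has a solution $\psi_\sigma$ that is $C^{1,2}$ (or, under the present regularity, lies in $W^{1,2}_{p,\mathrm{loc}}$ for all $p$, which still permits the It\^o--Krylov formula) and has bounded spatial gradient, uniformly in $\sigma$; this is exactly what is needed to apply It\^o's formula below.

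For $u$-stability, fix $v\in V$ and choose a Borel selector $u^\ast_{\xi,v}(s,y)\in\mathrm{Argmin}_{w\in U}\langle\nabla\psi_\sigma(s,y),f^1(s,y,w,v)\rangle$, which exists by a measurable selection theorem since $(s,y,w)\mapsto\langle\nabla\psi_\sigma(s,y),f^1(s,y,w,v)\rangle$ is continuous and $U$ is compact. Put $\mu^{t,\theta}_{\xi,v}(\tau,\cdot)\triangleq\delta_{u^\ast_{\xi,v}(\tau,Y^{t,\theta}_{\xi,v}(\tau))}$, a progressively measurable generalized control of the first player, and let $Y^{t,\theta}_{\xi,v}$ solve $dY=f^1(\tau,Y,u^\ast_{\xi,v}(\tau,Y),v)\,d\tau+\sigma\,dW$; a weak solution exists and is unique in law because the drift is bounded measurable and the diffusion is constant and nondegenerate. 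For $\varphi\in\mathcal{D}^2$, It\^o's formula applied to $\varphi(Y^{t,\theta}_{\xi,v}(\cdot))$ together with the boundedness of the coefficients (the $\vartheta_a$-growth being handled as in the existence result cited from \cite{Kol_markov}) yields the martingale property~(\ref{L_2_dynkin}). Applying It\^o to $s\mapsto\psi_\sigma(s,Y^{t,\theta}_{\xi,v}(s))$ and substituting the value of $\partial_t\psi_\sigma+\tfrac{\sigma^2}{2}\triangle\psi_\sigma$ from the equation, the drift of this process equals (all quantities evaluated at $(s,Y^{t,\theta}_{\xi,v}(s))$) $\langle\nabla\psi_\sigma,f^1(s,Y,u^\ast_{\xi,v},v)\rangle-\min_{w\in U}\max_{v'\in V}\langle\nabla\psi_\sigma,f^1(s,Y,w,v')\rangle$; since the selector attains $\langle\nabla\psi_\sigma,f^1(s,Y,u^\ast_{\xi,v},v)\rangle=\min_{w\in U}\langle\nabla\psi_\sigma,f^1(s,Y,w,v)\rangle\le\min_{w\in U}\max_{v'\in V}\langle\nabla\psi_\sigma,f^1(s,Y,w,v')\rangle$, this drift is nonpositive. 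Hence $s\mapsto\psi_\sigma(s,Y^{t,\theta}_{\xi,v}(s))$ is a supermartingale — the It\^o integral is a true martingale because $\nabla\psi_\sigma$ is bounded — which gives $\psi_\sigma(t,\xi)\ge\widetilde{\mathbb{E}}^{t,\theta}_{\xi,v}\psi_\sigma(\theta,Y^{t,\theta}_{\xi,v}(\theta))$, i.e.~(\ref{L_2_u_stable}); with $\psi_\sigma(T,\cdot)=g$ this is $u$-stability. Conditions (3) and (4) of Definition~\ref{def_u_stable} follow from the measurable dependence of the solution law, and of $\widetilde{\mathbb{E}}^{t,\theta}_{\xi,v}\varphi(Y^{t,\theta}_{\xi,v}(s))$, on $(\xi,v)$ and $s$.

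For $v$-stability one repeats the argument with the \emph{same} $\psi_\sigma$: fix $u\in U$, take a Borel selector $v^\ast_{\xi,u}(s,y)\in\mathrm{Argmax}_{w\in V}\langle\nabla\psi_\sigma(s,y),f^1(s,y,u,w)\rangle$, set $\mu^{t,\theta}_{\xi,u}(\tau,\cdot)\triangleq\delta_{v^\ast_{\xi,u}(\tau,Y(\tau))}$ (a generalized control of the second player) and let $Y^{t,\theta}_{\xi,u}$ solve $dY=f^1(\tau,Y,u,v^\ast_{\xi,u}(\tau,Y))\,d\tau+\sigma\,dW$; the same computation gives drift $\max_{w\in V}\langle\nabla\psi_\sigma,f^1(s,Y,u,w)\rangle-\min_{u'\in U}\max_{w\in V}\langle\nabla\psi_\sigma,f^1(s,Y,u',w)\rangle\ge0$, so $\psi_\sigma(s,Y(s))$ is a submartingale and $\psi_\sigma(t,\xi)\le\overline{\mathbb{E}}^{t,\theta}_{\xi,u}\psi_\sigma(\theta,Y^{t,\theta}_{\xi,u}(\theta))$. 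I expect the only genuinely delicate points to be the two classical inputs used above — the $C^{1,2}$ (or Sobolev) regularity of $\psi_\sigma$ with bounded gradient, where the nondegeneracy $\sigma>0$ is essential, and the well-posedness of the SDE carrying the merely measurable feedback drift produced by the selection theorem; granting these, the stability inequalities are an immediate consequence of It\^o's formula and the equation for $\psi_\sigma$.
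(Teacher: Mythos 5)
Your argument is correct, but it takes a genuinely different route from the paper. You prove the stability inequality by a direct verification argument: pick a Borel selector $u^\ast(s,y)$ minimizing $\langle\nabla\psi_\sigma(s,y),f^1(s,y,\cdot,v)\rangle$, run the nondegenerate SDE with this measurable feedback drift, apply It\^o (or It\^o--Krylov) to $\psi_\sigma(s,Y(s))$, and use the PDE to see the drift is nonpositive, so $\psi_\sigma(s,Y(s))$ is a supermartingale. The paper never differentiates $\psi_\sigma$ along the trajectory: it freezes $v$, invokes the result of Hamad\`ene--Lepeltier \cite{Hamadene_1995} for the one-player stochastic control problem with terminal cost $\psi_\sigma(\theta,\cdot)$ to obtain an optimal control $u^{t,\theta}_{\xi,v}$ together with a value function $\rho$ solving the HJB equation with Hamiltonian $\min_u\langle\nabla\rho,f^1(t,x,u,v)\rangle$, and then observes that $\psi_\sigma$ is a supersolution of that equation (since $\min_u\max_{v'}\ge\min_u$ at the fixed $v$), so the parabolic comparison principle \cite[Theorem I.16]{friedman_parabolic} gives $\psi_\sigma\ge\rho\ge\widetilde{\mathbb{E}}^{t,\theta}_{\xi,v}\psi_\sigma(\theta,Y^{t,\theta}_{\xi,v}(\theta))$; the generalized control is again the Dirac mass at the selected control. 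What your route buys is self-containedness and transparency: the inequality follows in two lines from It\^o once you grant $C^{1,2}$ regularity (with bounded gradient) of $\psi_\sigma$ and weak well-posedness of the SDE with merely measurable drift --- both of which you correctly flag as the delicate inputs, and both of which do hold here because $\sigma>0$ makes the equation uniformly parabolic and the diffusion is constant and nondegenerate. What the paper's route buys is that the construction of the (near-)optimal control and the dynamic programming inequality are delegated wholesale to the cited BSDE result, so no measurable selection, no It\^o formula for $\psi_\sigma$ itself, and no SDE with rough drift are needed; the price is the auxiliary function $\rho$ and the comparison-principle step, which itself tacitly uses classical regularity of $\psi_\sigma$ as a supersolution --- so in the end the regularity hypotheses underlying the two proofs are essentially the same, and neither proof establishes them from scratch.
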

\begin{proof}
Put $\widetilde{\Omega}^{t,\theta}\triangleq C([t,\theta],\mathbb{R}^d)$, $\widetilde{\mathcal{F}}_s^{t,\theta}=\mathcal{B}(C([t,\theta],\mathbb{R}^d))$, $\widetilde{\mathcal{F}}^{t,\theta}\triangleq\widetilde{\mathcal{F}}^{t,\theta}_\theta$. Let $\widetilde{P}^{t,\theta}_{\xi,v}$ be a Wiener measure on $\widetilde{\Omega}^{t,\theta}$. 

Further, for the constant second player's control $v\in V$ consider the control problem for the stochastic differential equation
\begin{equation}\label{sde}
dY(\tau)=f^1(\tau,Y(\tau),u(\tau),v)d\tau+\sigma dW(\tau),\ \ Y(t)=\xi
\end{equation} on time interval $[t,\theta]$ with the payoff functional given by $\widetilde{\mathbb{E}}^{t,\theta}_{\xi,v}\psi_\sigma(\theta,Y(\theta))$.

By \cite{Hamadene_1995} there exist a control $u^{t,\theta}_{\xi,v}$ and a function $\rho:\ir \rightarrow \mathbb{R}$ such that
\begin{equation*}\label{HJ_2_order_aux}
  \frac{\partial \rho}{\partial t}+\min_{u\in U}\langle \nabla \rho, f^1(t,x,u,v)\rangle+\frac{\sigma^2}{2}\triangle \rho=0, \ \ \rho(\theta,x)=\psi_\sigma(\theta,x).
\end{equation*}
and for $Y^{t,\theta}_{\xi,v}(\cdot)$  satisfying (\ref{sde}) with $u=u^{t,\theta}_{\xi,v}$
 the inequality
 $$\rho(t,\xi)\geq \widetilde{\mathbb{E}}^{t,\theta}_{\xi,v}\rho(\theta,Y^{t,\theta}_{\xi,v}(\theta)). $$

Using the comparison principle for parabolic equations (see, for example \cite[Theorem I.16]{friedman_parabolic}) we obtain that $$\psi_\sigma(t,\xi)\geq \rho(t,\xi)\geq \widetilde{\mathbb{E}}^{t,\theta}_{\xi}\rho(\theta,Y(\theta))= \widetilde{\mathbb{E}}^{t,\theta}_{\xi,v}\psi_\sigma(\theta,Y^{t,\theta}_{\xi,v}(\theta)).$$ To prove the $u$-stability of the function $\psi_\sigma$  with respect to $L^2_t$ it suffices to put $$\hat{\mu}^{t,\theta}_{\xi,v}\triangleq \delta_{u^{t,\theta}_{\xi,v}}$$ where $\delta_z$ denotes the Dirac measure concentrated at $z$.

The $v$-stability of $\psi_\sigma$ is proved in the same way.
\end{proof}

Theorem \ref{th_estimate_first}, Corollaries \ref{crl_gamma_1}, \ref{crl_gamma_2} and Propositions \ref{pl_u_stable_1}, \ref{pl_psi_stable} imply the following for $C_1\triangleq C\sqrt{d}$.

\begin{corollary}
  Let $\hat{\mathfrak{u}}_\Delta$ be a stepwise strategy constructed by (\ref{u_opt_space_def}) and (\ref{u_opt_def}) for $c_+=\psi_\sigma$. Then
  $$ \lim_{\delta\downarrow 0}\sup\{J(t_0,x_0,\hat{\mathfrak{u}}_\Delta,v): d(\Delta)\leq\delta,v\in\mathcal{V}\}\leq \psi_\sigma(t_0,x_0)+RC_1\sigma. $$
\end{corollary}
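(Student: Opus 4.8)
The plan is to assemble the final Corollary directly from the machinery already in place, so the proof is essentially a chain of citations rather than new estimates. First I would invoke Proposition~\ref{pl_psi_stable}: since $\psi_\sigma$ solves the stated parabolic Isaacs equation, it is both $u$-stable and $v$-stable with respect to the generator $L^2_\tau[u,v]\varphi(x)=\langle \nabla\varphi(x),f^1(t,x,u,v)\rangle+\tfrac{\sigma^2}{2}\triangle\varphi(x)$. Thus $c_+=\psi_\sigma$ is an admissible input for Theorem~\ref{th_estimate_first}, and the stepwise strategy $\hat{\mathfrak{u}}_\Delta$ built by~(\ref{u_opt_space_def}),~(\ref{u_opt_def}) is well-defined.

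Next I would identify the constants. Here the original system is the deterministic differential game with generator $L^1_t[u,v]\varphi(x)=\langle f^1(t,x,u,v),\nabla\varphi(x)\rangle$, so comparing with the diffusion generator $L^2$ we have $G^1\equiv 0$, $G^2=\sigma^2 I$, and $f^1=f^2$ together with $\nu^1=\nu^2=0$; hence $b^1=b^2$, which gives $\varkappa=0$ by~(\ref{kappa_def}). Moreover $\Sigma^1\equiv 0$ so $M_0^1=0$, while $\Sigma^2(t,x,u,v)=\sum_{j=1}^d G^2_{jj}=\sigma^2 d$, so we may take $M_0^2=\sigma^2 d$. Consequently $\Theta=\varkappa+M_0^1+M_0^2=\sigma^2 d$ by~(\ref{theta_def}), and $\sqrt{\Theta}=\sigma\sqrt{d}$. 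Since the Isaacs condition~(\ref{isaacs_diff}) is exactly~(L8)--(1) for $b^1=f^1$, the constant $\beta$ in~(\ref{beta_def_i}) and hence $C=\sqrt{Te^{\beta T}}$ in~(\ref{C_def}) are unchanged, and with $C_1\triangleq C\sqrt{d}$ we get $R\cdot C\sqrt{\Theta}=R\cdot C\sqrt{d}\,\sigma=RC_1\sigma$.

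Finally I would apply Theorem~\ref{th_estimate_first} with $c_+=\psi_\sigma$: for every $(t_0,x_0)\in\ir$,
\begin{equation*}
\lim_{\delta\downarrow 0}\sup\{J(t_0,x_0,\hat{\mathfrak{u}}_\Delta,\mathfrak{v}):d(\Delta)\leq\delta\}\leq \psi_\sigma(t_0,x_0)+R\cdot C\sqrt{\Theta}=\psi_\sigma(t_0,x_0)+RC_1\sigma.
\end{equation*}
Restricting the supremum over all strategies $\mathfrak{v}$ of the second player to the sub-collection of deterministic open-loop controls $v\in\mathcal{V}$ (which is a special case, as noted after Definition~\ref{def_u_strategy} and in Section~\ref{sect_diffgames}, where $J^*=J_*=J$) only decreases the left-hand side, yielding the claimed inequality. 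The only genuine content to check — and the step most prone to error — is the bookkeeping of the constants, in particular confirming that the $d$-dependence of $M_0^2=\sigma^2 d$ is what converts $\sqrt{\Theta}$ into $\sigma\sqrt d$ and thereby $C$ into $C_1=C\sqrt d$; everything else is a direct appeal to the already-established results. No separate proof of the $v$-sided bound is needed here since the statement concerns only the upper estimate via $\hat{\mathfrak{u}}_\Delta$, though one could symmetrically record the lower bound using Corollary~\ref{crl_gamma_2}.
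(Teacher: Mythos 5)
Your proposal is correct and follows the same route the paper intends: Proposition~\ref{pl_psi_stable} makes $\psi_\sigma$ an admissible $u$-stable function for Theorem~\ref{th_estimate_first}, and the constant bookkeeping ($\varkappa=0$, $M_0^1=0$, $M_0^2=\sigma^2 d$, hence $\sqrt{\Theta}=\sigma\sqrt d$ and $R\,C\sqrt{\Theta}=RC_1\sigma$ with $C_1=C\sqrt d$) is exactly what the paper's definition of $C_1$ encodes. The additional remarks (restricting the supremum to open-loop $v\in\mathcal{V}$, and checking that the Isaacs condition~(\ref{isaacs_diff}) is (L8)--(1)) are accurate and complete the argument.
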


\begin{corollary}\label{cor_rate}
  $|{\rm Val}(t_0,x_0)-\psi_\sigma(t_0,x_0)|\leq RC_1\sigma.$
\end{corollary}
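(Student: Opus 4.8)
The plan is to derive the two-sided bound $|{\rm Val}(t_0,x_0)-\psi_\sigma(t_0,x_0)|\le RC_1\sigma$ by combining the upper estimate supplied by the preceding corollary with the matching lower estimate obtained by the same machinery applied to the "mirror" game, and then identifying ${\rm Val}={\rm Val}_+={\rm Val}_-$ for the differential game. The key quantitative input is that for the pair of generators under consideration $L^1_t[u,v]\varphi(x)=\langle f^1(t,x,u,v),\nabla\varphi(x)\rangle$ and $L^2_\tau[u,v]\varphi(x)=\langle\nabla\varphi(x),f^1(t,x,u,v)\rangle+\tfrac{\sigma^2}{2}\triangle\varphi(x)$ one has $b^1=b^2=f^1$, hence $\varkappa=0$ in (\ref{kappa_def}); moreover $\Sigma^1\equiv 0$ so $M_0^1=0$, while $\Sigma^2(t,x,u,v)=d\sigma^2$ so $M_0^2=d\sigma^2$. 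Therefore $\Theta=\varkappa+M_0^1+M_0^2=d\sigma^2$ by (\ref{theta_def}), and $R\cdot C\sqrt{\Theta}=RC\sqrt{d}\,\sigma=RC_1\sigma$ with $C_1=C\sqrt d$ as stated just above the corollary.

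First I would invoke Proposition \ref{pl_psi_stable}: since $\psi_\sigma$ solves the quoted parabolic equation with terminal data $g$, it is both $u$-stable and $v$-stable with respect to the second generator $L^2$ above. Applying Corollary \ref{crl_gamma_1} with $c_+=\psi_\sigma$ and the value $\Theta=d\sigma^2$ just computed yields ${\rm Val}_+(t_0,x_0)\le\psi_\sigma(t_0,x_0)+RC_1\sigma$; this is exactly the content of the preceding corollary restricted to the actual value. Symmetrically, applying Corollary \ref{crl_gamma_2} with $c_-=\psi_\sigma$ gives $\psi_\sigma(t_0,x_0)-RC_1\sigma\le{\rm Val}_-(t_0,x_0)$. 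Since for the differential game the feedback (Krasovskii--Subbotin) formalization yields a genuine value, ${\rm Val}_-(t_0,x_0)={\rm Val}_+(t_0,x_0)={\rm Val}(t_0,x_0)$ — this is recorded in Section \ref{sect_diffgames} — and chaining the two inequalities through ${\rm Val}_-\le{\rm Val}_+$ gives
$$\psi_\sigma(t_0,x_0)-RC_1\sigma\le{\rm Val}_-(t_0,x_0)\le{\rm Val}(t_0,x_0)\le{\rm Val}_+(t_0,x_0)\le\psi_\sigma(t_0,x_0)+RC_1\sigma,$$
which is the claimed bound.

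The only genuine point requiring care — and the main obstacle — is the bookkeeping of the constants entering $\Theta$: one must verify that with the differential-game dynamics the approximating generator $L^2$ contributes exactly $M_0^2=\Sigma^2=d\sigma^2$ (trace of $\sigma^2 I_d$) while the original $L^1$ contributes $M_0^1=0$, and that condition (L8) is met (here in form (1), via the Isaacs assumption (\ref{isaacs_diff}) on $f^1$), so that the hypotheses of Theorem \ref{th_estimate_first} and its corollaries genuinely apply. Once $\Theta=d\sigma^2$ is confirmed, so that $C\sqrt\Theta=C_1\sigma$, the rest is a two-line concatenation of the two corollaries and the identification of ${\rm Val}$ with ${\rm Val}_\pm$ in the differential-game setting, requiring no further computation.
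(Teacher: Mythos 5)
Your proposal is correct and follows essentially the same route the paper intends: Proposition \ref{pl_psi_stable} gives $u$- and $v$-stability of $\psi_\sigma$ with respect to the diffusion generator, Corollaries \ref{crl_gamma_1} and \ref{crl_gamma_2} are then applied with $\varkappa=0$, $M_0^1=0$, $M_0^2=d\sigma^2$, so $\Theta=d\sigma^2$ and $RC\sqrt{\Theta}=RC_1\sigma$, and the bound is closed by identifying ${\rm Val}$ with ${\rm Val}_\pm$ for the differential game. The only cosmetic caveat is that the equality ${\rm Val}={\rm Val}_+={\rm Val}_-$ is not literally "recorded" in Section \ref{sect_diffgames} but follows from the Krasovskii--Subbotin results quoted there together with the inclusion of stepwise feedback strategies in the memory-strategy classes, which is the same level of detail the paper itself leaves implicit.
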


\begin{Remark} Corollary \ref{cor_rate} provides the rate of convergence for the vanishing viscosity approximations of Hamilton--Jacobi PDE. This result corresponds to \cite[Proposition~3.2]{Camilli}.
\end{Remark}

\section{Approximation of differential game by Markov games}
Given a differential game with  dynamics (\ref{system}) define the Markov game in the following way.

Let $h$ be a positive number, $f^1(t,x,u,v)=(f^1_1(t,x,u,v),\ldots,f_d^1(t,x,u,v))$ and let $e^i$ denote the $i$-th coordinate vector. Put $$\chi_i(t,x,u,v)=\left\{
\begin{array}{cc}
  e^i, & f_i^1(t,x,u,v)>0, \\
  -e^i, & f_i^1(t,x,u,v)<0, \\
  0, & f_i^1(t,x,u,v)=0.
\end{array}
\right.$$
For $A\subset\mathbb{R}^d$
$$\nu^2(t,x,u,v,A)\triangleq\frac{1}{h}\sum_{i=1}^n|f_i(t,x,u,v)|\delta_{h\chi_i(t,x,u,v)}(A). $$
Recall that $\delta_z$ denotes the Dirac measure concentrated at $z$.

Further, define
\begin{multline}\label{l2_def_markov}L^2_t[u,v]\varphi(x)\triangleq\intrd [\varphi(x+y)-\varphi(x)]\nu^2(t,x,u,v,dy)\\ =\sum_{i=1}^n|f_i(t,x,u,v)|\frac{\varphi(x+h\chi_i(t,x,u,v))-\varphi(x)}{h}. \end{multline}

This generator corresponds to the continuous-time Markov chain on $h\mathbb{Z}^d$ with the Kolmogorov matrix
\begin{equation}\label{kolmogorov_mtr_approx}
Q_{xy}^h(t,u,v)=\left\{
\begin{array}{lr}
  \frac{1}{h}|f_i(t,x,u,v)|, & y=x+h\chi_i(t,x,u,v), \\
  -\frac{1}{h}\sum_{i=1}^d|f_i(t,x,u,v)|, & x=y, \\
  0, & y\neq x,\ \  y\neq x+h\chi_i(t,x,u,v),
\end{array}
\right.
\end{equation}
The value function for the game with the generator $L^2$ defined by (\ref{l2_def_markov}) provides the upper and lower bounds for the value function of the differential game.

The following system of ODEs is the Isaacs--Bellman equation for the Markov game.
\begin{multline}\label{zachrisson_upper}
  \frac{d}{dt}\eta_h^+(t,x)+\min_{u\in U}\max_{v\in V}\sum_{i=1}^d|f_i(t,x,u,v)|\frac{\eta^+_h(t,x+h\chi_i(t,x,u,v))-\eta^+_h(t,x)}{h}=0, \\ \eta^+_h(T,x)=g(x)
\end{multline}  where $x\in h\mathbb{Z}^d$ is a parameter.
\begin{Pl}\label{pl_existence}There exists an unique solution of (\ref{zachrisson_upper}).
\end{Pl}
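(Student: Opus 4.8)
The plan is to reduce the question to a standard existence-uniqueness result for a system of ordinary differential equations in a suitable Banach space, using the fact that for fixed $x \in h\mathbb{Z}^d$ the right-hand side of~(\ref{zachrisson_upper}) couples $\eta_h^+(t,x)$ only to finitely many neighbours $\eta_h^+(t, x + h\chi_i(t,x,u,v))$, each of which lies at distance $h$ from $x$. Since $\eta_h^+$ is required to be bounded (it equals $g$, which is Lipschitz, at the terminal time, and the dynamics moves on the lattice with bounded rates $\frac1h|f_i|$), it is natural to work in the space $\ell^\infty(h\mathbb{Z}^d)$ of bounded functions on the lattice, equipped with the sup norm, and regard $t \mapsto \eta_h^+(t,\cdot)$ as a curve in that Banach space. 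Because of the backward terminal condition I would first substitute $\tau = T - t$ to obtain a forward Cauchy problem.

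The key steps, in order, are as follows. First I would write the system~(\ref{zachrisson_upper}) as $\frac{d}{d\tau}\zeta(\tau) = \mathcal{H}_\tau[\zeta(\tau)]$, $\zeta(0) = g$, where for $\zeta \in \ell^\infty(h\mathbb{Z}^d)$ the operator $\mathcal{H}_\tau$ is defined pointwise by
\begin{equation*}
(\mathcal{H}_\tau[\zeta])(x) = \max_{v\in V}\min_{u\in U}\sum_{i=1}^d |f_i(T-\tau,x,u,v)|\,\frac{\zeta(x + h\chi_i(T-\tau,x,u,v)) - \zeta(x)}{h}.
\end{equation*}
Second, I would verify that $\mathcal{H}_\tau$ maps $\ell^\infty(h\mathbb{Z}^d)$ into itself and is globally Lipschitz in the sup norm, uniformly in $\tau$: using that $\min\max$ (resp.\ $\max\min$) of functions differing by at most $\varepsilon$ differ by at most $\varepsilon$, and that $|f_i|$ is bounded by $M_1^1$, one gets $\|\mathcal{H}_\tau[\zeta_1] - \mathcal{H}_\tau[\zeta_2]\|_\infty \le \frac{2 d M_1^1}{h}\,\|\zeta_1 - \zeta_2\|_\infty$, with a constant independent of $\tau$ and $x$. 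Third, since $\tau \mapsto \mathcal{H}_\tau[\zeta]$ is continuous (indeed $f^1$ is continuous and $U,V$ compact, so the $\min\max$ depends continuously on $\tau$), the classical Picard–Lindelöf theorem in a Banach space (or a direct Banach fixed-point argument on $C([0,T];\ell^\infty(h\mathbb{Z}^d))$ with a weighted sup-in-time norm) yields a unique global solution $\zeta \in C^1([0,T];\ell^\infty(h\mathbb{Z}^d))$. Finally, unwinding the substitution gives the unique bounded solution $\eta_h^+$ of~(\ref{zachrisson_upper}), which is moreover $C^1$ in $t$ for each $x$.

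The main obstacle, and the only genuinely non-routine point, is the choice of the right function space and the verification that the Hamiltonian is well-defined and Lipschitz there: one must be careful that the map $x \mapsto \chi_i(t,x,u,v)$, though discontinuous where some $f_i$ changes sign, causes no difficulty because it enters only through the bounded weight $|f_i|$ and through an index shift by a lattice vector of norm $h$, so the estimate above goes through without any continuity of $\chi_i$ in $x$. One should also note that measurability/selection issues for the minimizing $u$ and maximizing $v$ do not arise here, since we only need the value of the $\min\max$, not a selector. If one instead wanted the solution on the full lattice without a boundedness a priori bound, one could alternatively argue by a comparison/monotonicity principle for the Markov-game dynamics, but the $\ell^\infty$ fixed-point route is the shortest and is self-contained given the hypotheses on $f^1$.
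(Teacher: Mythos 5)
Your overall strategy---recasting (\ref{zachrisson_upper}) as an ODE in a Banach space of functions on the lattice with a globally Lipschitz right-hand side and invoking a Picard--Lindel\"{o}f/fixed-point theorem---is exactly the paper's strategy. The genuine gap is your choice of space. You work in $\ell^\infty(h\mathbb{Z}^d)$ and justify this by saying $\eta_h^+$ must be bounded because it equals $g$ at the terminal time and $g$ is Lipschitz; but Lipschitz does not imply bounded, and assumption (L7) gives only Lipschitz continuity of $g$. On the unbounded lattice $h\mathbb{Z}^d$ the terminal datum $g$ is in general not an element of $\ell^\infty$ (take $g(x)=x_1$), so your Cauchy problem $\zeta(0)=g$ cannot even be posed in your space and the iteration never starts; moreover, your uniqueness claim would at best hold within the class of bounded solutions, which is not the natural class here. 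The paper avoids this by working in the weighted space $\mathcal{M}$ of functions $\varrho:h\mathbb{Z}^d\rightarrow\mathbb{R}$ with $\|\varrho\|_{\mathcal{M}}=\sup_{x}|\varrho(x)|/(h+\|x\|)<\infty$, i.e.\ allowing linear growth: then $g\in\mathcal{M}$ by Lipschitz continuity, the shifted term is controlled via $h+\|x+h\chi_i(t,x,u,v)\|\leq 2(h+\|x\|)$ so the Hamiltonian is still Lipschitz on $\mathcal{M}$ (with an $h$-dependent constant, which is harmless since $h$ is fixed), and the paper first checks that any solution of (\ref{zachrisson_upper}) automatically lies in $\mathcal{M}$, which is what makes the uniqueness assertion meaningful. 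Your argument is repaired simply by replacing $\ell^\infty$ with such a weighted (linear-growth) space; the rest of your steps (time reversal, uniform-in-$t$ Lipschitz bound, continuity in $t$ via compactness of $U$, $V$) then go through as in the paper.

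A minor additional point: in defining your operator you wrote $\max_{v}\min_{u}$ whereas (\ref{zachrisson_upper}) has $\min_{u}\max_{v}$. The upwind Hamiltonian is only piecewise linear in $f^1$, so the Isaacs condition (\ref{isaacs_diff}) does not automatically justify interchanging the order here; keep the order as in the equation (the Lipschitz estimate is unaffected either way).
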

\begin{proof} We consider  system (\ref{zachrisson_upper}) as a differential equation in the Banach space
$$\mathcal{M}=\left\{\varrho:h\mathbb{Z}^d\rightarrow \mathbb{R}\mbox{ with } \|\varrho\|_{\mathcal{M}}<\infty\right\} $$ where
$$\|\varrho\|_{\mathcal{M}}=\sup_{x\in h\mathbb{Z}^d}\frac{|\varrho(x)|}{h+\|x\|}. $$

First let us show that if $\eta_h^+$ solves  (\ref{zachrisson_upper}) then $\eta_h^+(t,\cdot)$ belongs to $\mathcal{M}$ for any $t\in [0,T]$. We have that
$$\frac{|\eta_h^+(t,x)|}{h+\|x\|}\leq \frac{|g(x)|}{h+\|x\|}+dM_1^1\int^{T}_t\left[\frac{|\eta_h^+(\tau,x)|}{h+\|x\|}+ \frac{|\eta_h^+(\tau,x+h\chi_i(t,x,u,v))|}{h+\|x\|}\right]d\tau.  $$
Since $2(h+\|x\|)\geq 2h+\|x\|\geq h+\|x+h\chi_i(t,x,u,v))\|$, we get the inequality
$$\|\eta_h^+(t,\cdot)\|_{\mathcal{M}}\leq \|g(\cdot)\|_{\mathcal{M}}+3dM_1^1\int^{T}_t\|\eta_h^+(\tau,\cdot)\|_{\mathcal{M}}d\tau.  $$ From Lipschitz continuity of the function $g$ and Gronwall's inequality it follows that
for $t\in [0,T]$
$$\|\eta_h^+(t,\cdot)\|_{\mathcal{M}}<\infty. $$ Thus, if $\eta^+_h$ solves (\ref{zachrisson_upper}) then $\eta_h^+(t,\cdot)\in\mathcal{M}$.

Define the mapping $\mathcal{H}:[0,T]\times \mathcal{M}\rightarrow \mathcal{M}$ by the rule
$$\mathcal{H}[t,\varrho](x)=\min_{u\in U}\max_{v\in V}\sum_{i=1}^d|f_i(t,x,u,v)|\frac{\varrho(x+h\chi_i(t,x,u,v))-\varrho(x)}{h}. $$
Consider the boundary value problem
\begin{equation}\label{zach_revised}
\frac{d}{dt}\varrho[t]=-\mathcal{H}[t,\varrho[t]], \ \ \varrho[T](x)=g(x).
\end{equation}
We have that the function $\mathcal{H}$ is continuous and Lipschitz continuous w.r.t. $\varrho$. Indeed, for $t,s\in [0,T]$, $\varrho,\varrho',\varrho''\in\mathcal{M}$
$$ \|\mathcal{H}[t,\varrho]-\mathcal{H}[s,\varrho]\|_{\mathcal{M}}\leq 3\alpha_1^1(t-s)\|\varrho\|_\mathcal{M},\ \ \|\mathcal{H}[t,\varrho']-\mathcal{H}[t,\varrho'']\|\leq 3 M_1^1\|\varrho'-\varrho''\|_{\mathcal{M}}. $$ Hence, by \cite[\S 1]{deimling}  problem (\ref{zach_revised})
has an unique solution $\varrho^*[\cdot]$.

 Put $\eta_h^+(t,x)\triangleq\varrho^*[t](x)$.
The function $\eta_h^+$ is an unique solution of (\ref{zachrisson_upper}).
\end{proof}
\begin{Th}\label{th_upper_bound}
There exists a constant $C_2$ determined by the function $f^1$ such that if $\eta_h^+$ is a solution of (\ref{zachrisson_upper}) then for $t_0\in[0,T]$, $x_0\in h\mathbb{Z}^d$
    \begin{equation}\label{ineq_approx_divf}
    |{\rm Val}(t_0,x_0)-\eta_h^+(t_0,x_0)|\leq RC_2\sqrt{h}.
    \end{equation}
\end{Th}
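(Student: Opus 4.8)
The plan is to obtain the bound \eqref{ineq_approx_divf} by applying the abstract estimates of Section \ref{sect_strategies} to the pair consisting of the original differential game \eqref{system}--\eqref{L1_diff} and the Markov game with generator \eqref{l2_def_markov}. For this to work, I must verify that the Markov generator $L^2$ satisfies the standing conditions (L1)--(L8), identify the solution $\eta_h^+$ of \eqref{zachrisson_upper} as both a $u$-stable and a $v$-stable function with respect to $L^2$, and then control the constants $\varkappa$ and $\Theta$ appearing in Theorem \ref{th_estimate_first} in terms of $h$. First I would record that for the Markov generator one has $G^2\equiv 0$, $f^2(t,x,u,v)=\sum_{i=1}^d|f_i^1(t,x,u,v)|h\chi_i(t,x,u,v)/h$ contributions only through $\nu^2$, and in fact, since all jumps of $\nu^2$ have size $h\le 1$ (for $h$ small), $\intrmb y\,\nu^2(t,x,u,v,dy)=0$, so that $b^2(t,x,u,v)=f^2(t,x,u,v)=\sum_{i=1}^d f_i^1(t,x,u,v)e^i=f^1(t,x,u,v)$. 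Thus $b^2=b^1=f^1$ \emph{identically}, which gives $\varkappa=0$ by \eqref{kappa_def}. Moreover $\Sigma^2(t,x,u,v)=\int\|y\|^2\nu^2(t,x,u,v,dy)=\sum_{i=1}^d|f_i^1(t,x,u,v)|h\le d M_1^1 h$, so $M_0^2$ may be taken as $dM_1^1 h$, while $M_0^1=0$ for the deterministic generator \eqref{L1_diff}. Hence $\Theta=\varkappa+M_0^1+M_0^2\le dM_1^1 h$. Conditions (L1)--(L3) and (L7) are inherited from the hypotheses on $f^1$ and $g$; (L4)--(L6) for $b^2=f^1$ follow from continuity, boundedness by $M_1^1$, and $K^1$-Lipschitz continuity of $f^1$ together with the fact that $b^1$ obeys a modulus $\alpha_1^1$ coming from continuity in $t$ on the compact $[0,T]$; and (L8)--(1) is precisely the Isaacs condition \eqref{isaacs_diff}, which also holds for $b^2$ so (L8)--(2) holds too.

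Next I would show that $\eta_h^+$ is $u$-stable with respect to $L^2$ in the sense of Definition \ref{def_u_stable}, and likewise $v$-stable. Because \eqref{zachrisson_upper} is exactly the Isaacs--Bellman equation for the Markov game with generator $L^2$, this amounts to the standard dynamic-programming/stability characterization of the value of a finite-state (here countable-state on $h\mathbb{Z}^d$) continuous-time stochastic game. Concretely, for fixed $t<\theta$ I take $\widetilde\Omega^{t,\theta}$ to carry the canonical pure-jump process associated with the Kolmogorov matrix \eqref{kolmogorov_mtr_approx}; for each $\xi$ and each constant control $v$ of the second player I let $\mu^{t,\theta}_{\xi,v}$ be a (generalized, i.e.\ relaxed) optimal response of the first player in the finite-horizon Markov control problem on $[t,\theta]$ with terminal cost $\eta_h^+(\theta,\cdot)$, and let $Y^{t,\theta}_{\xi,v}$ be the corresponding controlled chain started at $\xi$. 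The martingale property \eqref{L_2_dynkin} is then just Dynkin's formula for this chain, and the supermartingale-type inequality \eqref{L_2_u_stable}, $\eta_h^+(t,\xi)\ge\widetilde{\mathbb E}^{t,\theta}_{\xi,v}\eta_h^+(\theta,Y^{t,\theta}_{\xi,v}(\theta))$, follows from the comparison/verification argument applied to \eqref{zachrisson_upper}: along the optimal $\mu$-response against the worst $v$, the map $s\mapsto\eta_h^+(s,Y^{t,\theta}_{\xi,v}(s))$ is an expectation-decreasing process because $\partial_t\eta_h^++\min_u\max_v(\cdots)=0$ and freezing $v$ can only help the minimizer. The measurability items (3)--(4) hold by a measurable-selection argument, using that $U,V$ are compact metric and the data are continuous, exactly as in Proposition \ref{pl_u_stable_1} and Proposition \ref{pl_psi_stable}. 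The $v$-stability is symmetric, using the Isaacs condition to interchange $\min_u$ and $\max_v$.

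With these two facts in hand I conclude: Corollary \ref{crl_gamma_1} applied with $c_+=\eta_h^+$ gives ${\rm Val}(t_0,x_0)\le\eta_h^+(t_0,x_0)+R\,C\sqrt{\Theta}$, and Corollary \ref{crl_gamma_2} applied with $c_-=\eta_h^+$ gives $\eta_h^+(t_0,x_0)-R\,C\sqrt{\Theta}\le{\rm Val}(t_0,x_0)$, where I use that for the deterministic dynamics \eqref{system} the upper and lower values coincide with ${\rm Val}(t_0,x_0)$ and, for the stepwise feedback strategies built in \eqref{u_opt_space_def}--\eqref{u_opt_def}, $J=J^*=J_*$. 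Combining the two inequalities yields
\begin{equation*}
|{\rm Val}(t_0,x_0)-\eta_h^+(t_0,x_0)|\le R\,C\sqrt{\Theta}\le R\,C\sqrt{dM_1^1}\,\sqrt{h},
\end{equation*}
so \eqref{ineq_approx_divf} holds with $C_2\triangleq C\sqrt{dM_1^1}$, a constant determined by $f^1$ (through $M_1^1$, $K^1$, and $\beta=2+2K^1$, $C=\sqrt{Te^{\beta T}}$). The point requiring the most care is the verification that the solution $\eta_h^+$ of the ODE system \eqref{zachrisson_upper} is genuinely $u$- and $v$-stable with respect to the jump generator $L^2$: one must produce the filtered spaces, the relaxed optimal responses, and the controlled chains with all the stated measurability, and check the stability inequality via a comparison argument for \eqref{zachrisson_upper}. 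A secondary, more bookkeeping-type obstacle is confirming that $\nu^2$ puts no mass outside $B_1$ for the relevant range of $h$ (so that the cut-off term in the Lévy--Khintchine form vanishes and $b^2=f^1$ exactly); if $h>1$ one would instead absorb the bounded correction into the constants, but for the asymptotics $h\downarrow 0$ this is immaterial.
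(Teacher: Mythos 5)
Your upper-bound half follows the paper: you verify (L1)--(L8) for the pair (\ref{L1_diff}), (\ref{l2_def_markov}), note $b^2=b^1=f^1$ so $\varkappa=0$, bound $\Theta$ by a constant times $M_1^1h$, establish $u$-stability of $\eta_h^+$ with respect to $L^2$ via an ${\rm Argmin}$ feedback, and invoke Corollary \ref{crl_gamma_1}; the discrepancy in the constant ($C\sqrt{dM_1^1}$ versus the paper's $d^{3/4}(M_1^1)^{1/2}C$) and the $h\le 1$ bookkeeping are immaterial. The lower-bound half is where you diverge from the paper, and where your argument as written has a genuine gap: you justify the $v$-stability of $\eta_h^+$ ``symmetrically, using the Isaacs condition to interchange $\min_u$ and $\max_v$''. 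The Isaacs condition (\ref{isaacs_diff}) concerns the Hamiltonian $\langle\xi,f^1(t,x,u,v)\rangle$, whereas the Hamiltonian in (\ref{zachrisson_upper}) is the upwind expression $\sum_i|f^1_i(t,x,u,v)|\bigl[\eta(t,x+h\chi_i(t,x,u,v))-\eta(t,x)\bigr]/h$, which is only piecewise linear in $f^1$; its $\min_u\max_v$ and $\max_v\min_u$ need not coincide, so $\eta_h^+$ cannot be identified with the lower solution, and this is exactly why the paper keeps $\eta_h^+$ and $\eta_h^-$ apart (Corollary \ref{crl_lower_bound}). The paper's own lower bound avoids $v$-stability of $\eta_h^+$ altogether: it swaps the roles of the two games ($\widehat L^1=L^2$, $\widehat L^2=L^1$), shows by a dynamic-programming/counter-strategy argument that $\eta_h^+$ is the upper value of the Markov game, uses that ${\rm Val}$ is $u$-stable with respect to $\widehat L^2$ (Proposition \ref{pl_u_stable_1}), and applies Corollary \ref{crl_gamma_1} a second time.

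Your route is nevertheless salvageable, and with the repair it is arguably shorter than the paper's: no interchange is needed for $v$-stability, since for any frozen $u_0\in U$ one trivially has $\max_{v}H(t,x,u_0,v)\ge\min_u\max_v H(t,x,u,v)=-\partial_t\eta_h^+(t,x)$ for the discrete Hamiltonian $H$, so an ${\rm Argmax}$ feedback $v^*(t,x;u_0)$ makes $s\mapsto\mathbb{E}\,\eta_h^+(s,Y(s))$ nondecreasing along the corresponding chain, which is precisely the inequality required in Definition \ref{def_v_stable}; this also dispenses with the paper's identification of $\eta_h^+$ as the Markov game's upper value. Two caveats remain. First, you should state this one-line inequality rather than appeal to Isaacs, since as written the step is false in spirit. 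Second, your conclusion via Corollary \ref{crl_gamma_2} bounds ${\rm Val}_-$ of the differential game in the randomized strategies-with-memory formalization, and the identification ${\rm Val}_-\le{\rm Val}$ (the Krasovskii--Subbotin value) is asserted, not proved; this is of the same level of informality as the paper's implicit use of ${\rm Val}\le{\rm Val}_+$ in the upper bound, but the paper's swapped-game route keeps the lower bound resting only on facts it actually proves (the DP identification and Proposition \ref{pl_u_stable_1}), so if you keep your route you should at least sketch the pathwise Krasovskii--Subbotin guarantee that yields ${\rm Val}_-\le{\rm Val}$ for the deterministic dynamics.
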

\begin{proof}
First, let us show that $\eta^+_h$ is an upper value  of the Markov game with the Kolmogorov matrix defined by (\ref{kolmogorov_mtr_approx}). If $\mathfrak{u}$ and $\mathfrak{v}$ are strategies of the first and second players respectively then denote the upper outcome in the game with the generator $L^2$ given by (\ref{l2_def_markov}) by $\mathcal{I}^*(t_0,x_0,\mathfrak{u},\mathfrak{v})$.

Let $\mathfrak{u}=(\Omega^U,\mathcal{F}^U,\{\mathcal{F}^U_s\}_{s\in [t_0,T]},u_{x(\cdot)},P^U_{x(\cdot)})$ be a strategy of the first player. Consider the following counter-strategy of the second player $\bar{\mathfrak{v}}[\mathfrak{u}]=(\Omega^V,\mathcal{F}^V,\{\mathcal{F}^V_s\}_{s\in [t_0,T]},v_{x(\cdot)},P^V_{x(\cdot)})$, where
$$\Omega^V\triangleq\Omega^U,\ \ \mathcal{F}^V\triangleq\mathcal{F}^U\ \ \mathcal{F}^V_s\triangleq\mathcal{F}^U_s\ \ P^V_{x(\cdot)}\triangleq P_{x(\cdot)}^U, $$
$$v_{x(\cdot)}(t,\omega^U)\triangleq \bar{v}_x(t,u_{x(\cdot)}(t,\omega^U)),$$
$$\bar{v}_x(t,u)\in {\rm Argmax}\left\{\sum_{i=1}^d f_i^1(t,x(t),u,v)\eta^+_h(t,x(t)):v\in V\right\}. $$ Note that the Markov chain generated by the pair of strategies $\mathfrak{u}$, $\bar{\mathfrak{v}}[u]$ corresponds  to the controlled Markov chain with the Kolmogorov matrix
$Q_{xy}^h(t,u,\bar{v}_{x}(t,u))$.

Using dynamical programming arguments \cite[Theorem 8.1]{fleming_soner}  we obtain that
\begin{multline}\label{lower_eta} \min_{\mathfrak{u}}\max_{\mathfrak{v}}\mathcal{I}^*(t_0,x_0,\mathfrak{u},\mathfrak{v})
= \min_{\mathfrak{u}}\mathcal{I}^*(t_0,x_0,\mathfrak{u},\bar{\mathfrak{v}}[\mathfrak{u}])\\\leq
\mathcal{I}^*(t_0,x_0,\mathfrak{u}^*,\bar{\mathfrak{v}}[\mathfrak{u}^*])=\eta^+_h(t_0,x_0),\end{multline} where
$\mathfrak{u}^*=(\Omega^{*,U},\mathcal{F}^{*,U},\{\mathcal{F}^{*,U}_s\}_{s\in [t_0,T]},u_{x(\cdot)}^*,P^{*,U}_{x(\cdot)})$ is
such that $$u^*_{x(\cdot)}(t)=u^*(t,x(t))\in {\rm Argmin}\left\{\max_{v\in V}\sum_{i=1}^df^i(t,x,u,v)\eta_h^+(t,x):u\in U\right\}.$$
Further, consider the controlled Markov chain with the Kolmogorov matrix
$Q_{xy}^h(t,u^*(t,x),v)$. This system corresponds to the case when the first player uses the strategy $\mathfrak{u}^*$. Using dynamical programming arguments once more time  we obtain that
\begin{multline}\label{upper_eta} \min_{\mathfrak{u}}\max_{\mathfrak{v}}\mathcal{I}^*(t_0,x_0,\mathfrak{u},\mathfrak{v})
= \max_{\mathfrak{v}}\mathcal{I}^*(t_0,x_0,\mathfrak{u}^*,\mathfrak{v})\\\geq
\mathcal{I}^*(t_0,x_0,\mathfrak{u}^*,\bar{\mathfrak{v}}[\mathfrak{u}^*])=\eta^+_h(t_0,x_0).\end{multline}
Combining (\ref{lower_eta}) and (\ref{upper_eta}) we obtain that $\eta^+_h$ is an upper value function for the game with the generator defined by (\ref{l2_def_markov}).

Now let us show that $\eta_h^+$ is $u$-stable with respect to $L^2$ given by (\ref{l2_def_markov}).
Let $t,\theta\in [0,T]$, $t<\theta$. We have that if the initial position of the Markov chain with  Kolmogorov matrix (\ref{kolmogorov_mtr_approx}) belongs to $h\mathbb{Z}^d$ then the state of this Markov chain belongs to $h\mathbb{Z}^d$. Put
$$\widetilde{\Omega}^{t,\theta}\triangleq D([t,\theta],h\mathbb{Z}^d),\ \ \widetilde{\mathcal{F}}_s^{t,\theta}\triangleq \mathcal{B}(D([t,s],h\mathbb{Z}^d)), \ \ \widetilde{\mathcal{F}}^{t,\theta}\triangleq\widetilde{\mathcal{F}}_s^{t,\theta}. $$

For a given $v\in V$, $\xi\in h\mathbb{Z}^d$, $\omega\in\widetilde{\Omega}^{t,\theta}$ put $$u_{\xi,v}^{t,\theta}(\tau,\omega)\triangleq u^*(\tau,\omega(\tau)). $$ Further, let $\widetilde{P}_{\xi,v}^{t,\theta}$ be a probability of the Markov chain with the Kolmogorov matrix
$Q^h_{xy}(\tau,u^*(\tau,x),v)$. Let $Y_{\xi,v}^{t,\theta}(\tau)$ be a state at time $\tau$ of the Markov chain starting at $(t,\xi)$. This means that for any $\varphi\in\mathcal{D}^2$
$$\varphi(Y_{\xi,v}^{t,\theta}(s))- \int_{t}^sL^2_\tau[u_{\xi,v}^{t,\theta}(\tau),v]\varphi(Y^{t,\theta}_{\xi,v}(\tau))d\tau $$ is a martingale. The dynamic programming principle  yields that
$$\eta_h^+(t,\xi)\geq \widetilde{\mathbb{E}}^{t,\theta}_{\xi,v}\eta^+(\theta,Y^{t,\theta}_{\xi,v}(\theta)). $$
Setting $$\hat{\mu}^{t,\theta}_{\xi,v}\triangleq \delta_{u^{t,\theta}_{\xi,v}} $$
we obtain that the function $\eta_h^+$ is $u$-stable with respect to the generator $L^2$ given by~(\ref{l2_def_markov}).

Using (\ref{kappa_def}) and (L5) we conclude that for the generators $L^1$ and $L^2$ given by (\ref{L1_diff}) and (\ref{l2_def_markov}) respectively  $$\varkappa=0, \ \  M_0^1=0,$$
\begin{multline*}M_0^2=\sup_{t\in [0,T],x\in\mathbb{R}^d,u\in U,v\in V}\intrd y^2\nu^2(t,x,u,v,dy)\\=\sup_{t\in [0,T],x\in\mathbb{R}^d,u\in U,v\in V}\sum_{i=1}^d h|f_i(t,x,u,v)|\leq d^{3/2}M_1^1h.\end{multline*} Hence, Corollary~\ref{crl_gamma_1} yields the inequality
\begin{equation}\label{ineq_approx_diff_upper}
{\rm Val}(t_0,x_0)\leq\eta_h^+(t_0,x_0)+RC_2\sqrt{h}
 \end{equation}
for any $(t_0,x_0)\in[0,T]\times h\mathbb{Z}$, and the constant  $C_2\triangleq d^{3/4}(M_1^1)^{1/2}C$, where $C$ is defined by (\ref{C_def}).

Now we shall prove the following inequality
\begin{equation}\label{ineq_approx_diff_lowe}
\eta_h^+(t_0,x_0)\leq {\rm Val}(t_0,x_0)+RC_2\sqrt{h}.
 \end{equation} Consider the generators $\widehat{L}^1_t[u,v]=L^2_t[u,v]$ and $\widehat{L}^2_t[u,v]=L^1_t[u,v]$ where $L^1$ and $L^2$ are determined by (\ref{L1_diff}) and (\ref{l2_def_markov}) respectively. We have that the pair of generators $\widehat{L}^1$ and $\widehat{L}^2$ satisfies conditions (L1)--(L8). Note that in this case condition (L8)--(2) is fulfilled (see (\ref{isaacs_diff})). The function ${\rm Val}$ is $u$-stable with respect to the generator $\widehat{L}^2$ (see Proposition \ref{pl_u_stable_1})). Since $\eta_h^+$ is an upper value function for the Markov game, using Corollary \ref{crl_gamma_1} we get inequality (\ref{ineq_approx_diff_lowe}).

 Combining (\ref{ineq_approx_diff_upper}) and (\ref{ineq_approx_diff_lowe}) we get inequality (\ref{ineq_approx_divf}).
\end{proof}

\begin{corollary}\label{crl_lower_bound}
 For $t_0\in [0,T]$, $x_0\in h\mathbb{Z}^d$
    $$
    |{\rm Val}(t_0,x_0)-\eta_h^-(t_0,x_0)|\leq RC_2\sqrt{h}.
   $$
\end{corollary}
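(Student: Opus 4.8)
The plan is to reproduce the proof of Theorem~\ref{th_upper_bound} with the two players, and with $\min$ and $\max$, interchanged throughout. Here $\eta_h^-$ is the solution of the Isaacs--Bellman system obtained from~(\ref{zachrisson_upper}) by replacing $\min_{u\in U}\max_{v\in V}$ with $\max_{v\in V}\min_{u\in U}$; it exists and is unique by the Banach-space fixed-point argument of Proposition~\ref{pl_existence} verbatim, since the Lipschitz estimates on the associated operator $\mathcal H$ do not see the order of $\min$ and $\max$. First I would check that $\eta_h^-$ is the \emph{lower} value of the Markov game with Kolmogorov matrix~(\ref{kolmogorov_mtr_approx}): against an arbitrary strategy $\mathfrak v$ of the second player one builds the counter-strategy $\bar{\mathfrak u}[\mathfrak v]$ of the first player whose control pointwise minimizes $\sum_{i=1}^d f_i^1(t,x(t),u,v)\,\eta_h^-(t,x(t))$ in $u$, and applies the dynamic programming principle twice to sandwich $\sup_{\mathfrak v}\inf_{\mathfrak u}$ of the lower outcome between $\eta_h^-(t_0,x_0)$ from both sides, exactly mirroring~(\ref{lower_eta})--(\ref{upper_eta}).

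Next I would show that $\eta_h^-$ is $v$-stable with respect to $L^2$ of~(\ref{l2_def_markov}): taking $\overline\Omega^{t,\theta}\triangleq D([t,\theta],h\mathbb Z^d)$ with the natural filtration, letting $\overline P^{t,\theta}_{\xi,u}$ be the law of the Markov chain with matrix $Q^h_{xy}(\tau,u,v^*(\tau,x))$ started at $(t,\xi)$ — where $v^*$ saturates the $\max_v\min_u$ in the lower analogue of~(\ref{zachrisson_upper}) — and setting $\hat\mu^{t,\theta}_{\xi,u}\triangleq\delta_{v^{t,\theta}_{\xi,u}}$ with $v^{t,\theta}_{\xi,u}(\tau,\omega)\triangleq v^*(\tau,\omega(\tau))$, the dynamic programming inequality gives $\eta_h^-(t,\xi)\le\overline{\mathbb E}^{t,\theta}_{\xi,u}\eta_h^-(\theta,Y^{t,\theta}_{\xi,u}(\theta))$, which is the defining inequality of Definition~\ref{def_v_stable}. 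Since, exactly as in Theorem~\ref{th_upper_bound}, $\varkappa=0$, $M_0^1=0$ and $M_0^2\le d^{3/2}M_1^1h$, so that $R\cdot C\sqrt\Theta\le RC_2\sqrt h$ with $C_2=d^{3/4}(M_1^1)^{1/2}C$, Corollary~\ref{crl_gamma_2} yields $\eta_h^-(t_0,x_0)-RC_2\sqrt h\le{\rm Val}_-(t_0,x_0)={\rm Val}(t_0,x_0)$. For the reverse inequality I would swap the two generators, $\widehat L^1\triangleq L^2$, $\widehat L^2\triangleq L^1$ — a pair satisfying (L1)--(L8) with (L8)--(2) in force by~(\ref{isaacs_diff}), as already noted in the proof of Theorem~\ref{th_upper_bound} — use that ${\rm Val}$ is $v$-stable with respect to $\widehat L^2=L^1$ (the $v$-stability analogue of Proposition~\ref{pl_u_stable_1}) and that $\eta_h^-$ is the lower value of the corresponding ``first'' Markov game, and apply Corollary~\ref{crl_gamma_2} once more to get ${\rm Val}(t_0,x_0)-RC_2\sqrt h\le\eta_h^-(t_0,x_0)$. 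Combining the two bounds gives the claim.

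I do not expect a genuine obstacle: every ingredient is the mirror image of one already proved for $\eta_h^+$, and the only thing that must be checked is that the constants are invariant under the transformation — $R$ is the Lipschitz modulus of $g$ (equivalently of $-g$), $C=\sqrt{Te^{\beta T}}$ with $\beta=2+2K^1$ depends only on $T$ and the Lipschitz constant of $f^1$, and $C_2$ only on $d$, $M_1^1$, $C$, none of which is affected by swapping the players. Equivalently, one may simply apply Theorem~\ref{th_upper_bound} to the differential game with payoff $-g$ and the two players interchanged; the only extra remark then needed is that~(\ref{isaacs_diff}) read at $-\xi$ in place of $\xi$ is precisely the Isaacs condition of that swapped game, after which the conclusion of Theorem~\ref{th_upper_bound} reads $|(-{\rm Val}(t_0,x_0))-(-\eta_h^-(t_0,x_0))|\le RC_2\sqrt h$.
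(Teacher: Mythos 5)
Your proposal is correct, and its closing remark --- apply Theorem~\ref{th_upper_bound} to the game with payoff $-g$ and the players interchanged, observing that (\ref{isaacs_diff}) evaluated at $-\xi$ is the Isaacs condition of the swapped game --- is exactly the paper's own one-line proof (the paper's ``$-\sigma$'' there is a typo for $-g$). The longer mirrored argument you give first (showing $\eta_h^-$ is the lower value of the Markov game, proving its $v$-stability, and applying Corollary~\ref{crl_gamma_2} in both directions) merely spells out in detail what that symmetry argument compresses, so it is the same approach.
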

\begin{proof}
To prove this Corollary it suffices to interchange the players and replace the payoff function with  $-\sigma$ in Theorem \ref{th_upper_bound}.
\end{proof}

\end{document}